\newtheorem{theorem}{Theorem}[section]
\newtheorem{lemma}[theorem]{Lemma}
\newtheorem{assumption}[theorem]{Assumption}
\theoremstyle{definition}
\theoremstyle{remark}
\newtheorem{remark}[theorem]{Remark}
\numberwithin{equation}{section}
\crefname{lemma}{Lemma}{Lemmata}
\crefname{theorem}{Theorem}{Theorems}
\crefname{corollary}{Corollary}{Corollarys}
\crefname{algorithm}{Algorithm}{Algorithms}
\crefname{assumption}{Assumption}{Assumptions}
\crefname{definition}{Definition}{Definitions}
\crefname{proof}{Proof}{Proofs}
\crefname{proof}{Proof}{Proofs}
\crefname{figure}{Figure}{Figures}
\crefname{section}{Section}{Sections}
\crefname{subsection}{Subsection}{Subsections}
\crefname{equation}{}{}
\DeclareMathOperator{\dist}{dist}
\DeclareMathOperator{\supp}{supp}
\DeclareMathOperator{\DG}{DG}
\DeclareMathOperator{\Div}{div}
\DeclareMathOperator{\DGbm}{\mathbf{DG}}
\let\P\relax
\DeclareMathOperator{\P}{P}
\let\d\relax
\DeclareMathOperator{\d}{d\!}
\begin{document}

\allowdisplaybreaks

\title[Numerical Analysis for Hyperbolic PDE-Constrained Optimization]{Numerical Analysis for a Hyperbolic PDE-Constrained Optimization Problem in Acoustic Full Waveform Inversion}

\author{Luis Ammann}
\address{University of Duisburg-Essen, Fakult\"at f\"ur Mathematik, Thea-Leymann-Str. 9, D-45127 Essen, Germany.}
\curraddr{}
\email{luis.ammann@uni-due.de}
\thanks{This work is supported by the DFG research grants YO159/2-2 and YO159/5-1.}

\author{Irwin Yousept}
\address{University of Duisburg-Essen, Fakult\"at f\"ur Mathematik, Thea-Leymann-Str. 9, D-45127 Essen, Germany.}
\curraddr{}
\email{irwin.yousept@uni-due.de}
\thanks{}

\subjclass[2010]{35L10, 35Q93, 65N30}

\keywords{Optimal control, full waveform inversion, hyperbolic acoustic PDEs, FEM, leapfrog time-stepping, stability, convergence}

\date{\today}

\begin{abstract}
This paper explores a fully discrete approximation for a nonlinear hyperbolic PDE-constrained optimization problem (P) with applications in acoustic full waveform inversion. The optimization problem is primarily complicated by the hyperbolic character and the second-order bilinear structure in the governing wave equation. While the control parameter is discretized using the piecewise constant elements, the state discretization is realized through an auxiliary first-order system along with the leapfrog time-stepping method and continuous piecewise linear elements. The resulting fully discrete minimization problem ($\text{P}_h$) is shown to be well-defined. Furthermore, building upon a suitable {CFL}-condition, we prove stability and uniform convergence of the state discretization. Our final result is the strong convergence result for ($\text{P}_h$) in the following sense: Given a local minimizer $\overline \nu$ of (P) satisfying a reasonable growth condition, there exists a sequence of local minimizers of ($\text{P}_h$) converging strongly towards $\overline \nu$.
\end{abstract}

\maketitle

\section{Introduction}
	
\label{section:introduction}

This paper proposes and analyzes a fully discrete approximation for a hyperbolic PDE-constrained optimization problem arising in acoustic full waveform inversion (FWI). Let us begin by formulating the corresponding state equation given by the damped acoustic second-order wave equation: 
\begin{equation}
	\label{system intro}
	\left\{\begin{aligned}
		&\nu\partial_{tt}p - \Delta p + \eta \partial_t p = f 
		&&\text{in }I\times \Omega
		\\
		&\partial_n p = 0 
		&&\text{on }I\times\Gamma_N
		\\
		&p = 0 
		&&\text{on }I\times\Gamma_D
		\\
		&(p,\partial_t p)(0) = (p_0,p_1) 
		&&\text{in }\Omega,
	\end{aligned}\right.
\end{equation}
where $\Omega \subset \mathbb R^N$ ($N\geq 2$) denotes a bounded Lipschitz domain with $\partial \Omega = \Gamma_N \cup \Gamma_D$, $I = [0, T]\subset\mathbb R$ the operating finite time interval, $p\colon I\times\Omega\to\mathbb R$ the acoustic pressure, $\nu\colon\Omega\to\mathbb R$ the square slowness parameter, $\eta\colon\Omega\to\mathbb R$ the damping parameter, $f\colon\Omega\to\mathbb R$ the source term, and $p_0, p_1\colon\Omega\to\mathbb R$ the initial data. The precise mathematical assumptions for all quantities involved in \eqref{system intro} are stated in \cref{assumption}. Given some observation data $p^{ob}_{i}\colon I\times\Omega\to\mathbb R$ at receivers modelled through the weight functions $a_i\colon I\times\Omega\to\mathbb R$ for $i=1,\dots,m$, FWI aims at recovering the acoustic parameter $\nu$ by solving the following PDE-constrained optimization problem:
\begin{equation}\label{P intro}
	\left\{\begin{aligned}
		& \inf \mathcal{J}(\nu, p) \coloneqq\frac{1}{2} \sum_{i=1}^m \int_I\int_{\Omega}a_i(p-p^{ob}_i)^2\d x\d t+\frac{\lambda}{2}\|\nu\|_{L^2(\Omega)}^2
		\\
		&\text{ s.t. } \cref{system intro} \text{ and } \nu\in\mathcal V_{ad}\coloneqq\{\nu\in L^2(\Omega): \nu_- \leq \nu(x)\leq \nu_+\text{ for a.e. }x\in\Omega\}.
	\end{aligned}\right.
\end{equation}
For a more detailed discussion on the appearing quantities, particularly their physical explanation, see our previous contribution \cite{ammann23}, that analyzed \cref{P intro} and established its first- and second-order optimality conditions making use of an auxiliary first-order approach (see \cref{system state}). 

While the numerical analysis of optimal control problems governed by elliptic and parabolic PDEs seems to have been extensively explored in the literature, there are only limited works devoted to the hyperbolic ones. We refer to Hou \cite{hou13} and the recent work by Peralta and Kunisch \cite{peralta22} for contributions to this research direction.

This paper is a continuation of \cite{ammann23} focusing on the numerical analysis of \cref{P intro} through a suitable fully discrete approximation. More specifically, we propose a state discretization (see \cref{leapfrog scheme}) based on the first-order formulation \cref{system state} and the leapfrog (Yee) time-stepping method together with piecewise linear elements. The proposed state discretization is shown to be stable under a suitable {CFL}-condition (see \cref{theorem stability}). Then, along with the piecewise constant control discretization, the fully discrete approximation for \eqref{P intro} is obtained in \cref{P discretized reduced}, which turns out to admit at least one global solution (see \cref{theorem existence}). Towards the convergence analysis of \cref{P discretized reduced}, we define specific interpolations associated with \cref{leapfrog scheme} and show their uniform convergence to the solution of \cref{system state} (see \cref{theorem interpolations}). This paves the way for our main convergence result (\cref{theorem convergence}): Given a local minimizer $\overline \nu $ of \eqref{P intro} satisfying a reasonable growth condition, we prove the existence of a sequence of locally optimal solutions of \cref{P discretized reduced} converging strongly 
towards $\overline \nu$. In the final part of this paper, a numerical test is included to illustrate the performance of the proposed finite element discretization.

\section{Preliminaries}
\label{section optimal control}

This section prepares the numerical analysis for \cref{P intro} by recapitulating the well-posedness result from \cite{ammann23}. We denote the space of all equivalence classes of measurable and Lebesgue square integrable $\mathbb R$-valued (resp. $\mathbb R^N$-valued) functions by $L^2(\Omega)$ (resp. $\bm{L}^2(\Omega)$). The space $C^\infty_0(\Omega)$ (resp. $\bm C^\infty_0(\Omega)$) contains all infinitely differentiable $\mathbb R$-valued (resp. $\mathbb R^N$-valued) functions with compact support in $\Omega$. 
Furthermore, we introduce 
\begin{align*}
	C^\infty_D(\Omega)
	&\coloneqq \{v\vert_\Omega : v\in C^\infty(\mathbb R^N), \dist(\supp(v), \Gamma_D)>0\}
	\\
	H^1_D(\Omega)
	&\coloneqq \overline{C^\infty_D(\Omega)}^{\|\cdot\|_{H^1(\Omega)}}
	\\
	\bm{H}_N(\Div,\Omega)
	&\coloneqq\{\bm{u}\in \bm{H}(\Div,\Omega)\colon(\Div\bm{u},\phi)_{L^2(\Omega)}= - (\bm{u},\nabla\phi)_{\bm{L}^2(\Omega)}\,\forall \phi\in H^1_D(\Omega)\}.
\end{align*}

\begin{remark}
	Thanks to the Lipschitz assumption on the domain $\Omega$, an equivalent characterization of $H^1_D(\Omega)$ using the trace operator $\tau\colon H^1(\Omega)\to H^{1/2}(\partial\Omega)$ reads $H^1_D(\Omega) = \{u\in H^1(\Omega): \tau u= 0 \text{ a.e. on }\Gamma_D\}$.
	Then, using the normal trace operator $\gamma_n\colon\bm H(\Div, \Omega)\to H^{-1/2}(\partial\Omega)$, an equivalent characterization of $\bm H_N(\Div, \Omega)$ is obtained as follows:
	\begin{align*}
		\bm H_N(\Div, \Omega)=\{
			&\bm u\in\bm H(\Div, \Omega): \langle \gamma_n\bm u, \phi\rangle_{H^{-1/2}(\partial\Omega), H^{1/2}(\partial\Omega)} = 0
			\\
			&\text{for all }\phi\in H^{1/2}(\partial\Omega)\text{ satisfying }\phi=0\text{ a.e. on }\Gamma_D\}.
	\end{align*}
\end{remark}

Let us now formulate the standing assumption of this paper:
\begin{assumption}\label{assumption}
	Let $\Omega\subset\mathbb R^N$ be a bounded polyhedral Lipschitz domain. The boundary is given by $\partial\Omega = \Gamma_N\cup\Gamma_D$ with the closed subset $\Gamma_D\subsetneq\partial\Omega$ and $\Gamma_N = \partial\Omega\setminus\Gamma_D$ satisfying $|\Gamma_N|\neq 0$. Let $p_0 \in H_D^1(\Omega)$, $p_1 \in L^\infty(\Omega)$, $f\in L^1(I,L^2(\Omega))$, and $p^{ob}_{i} \in W^{1,1}(I,L^2(\Omega))$ for all $i=1,\ldots, m$ and some $m\in\mathbb N$ be given data. The coefficients $a_i\in C^1(I, L^\infty(\Omega))$ and $\eta \in L^\infty(\Omega)$ are also given data and assumed to be nonnegative for all $i=1,\ldots, m$. Finally, let $\nu_-,\nu_+\in\mathbb R$ satisfy $0<\nu_-\leq\nu_+$.
\end{assumption}

We introduce $F \in W^{1,1}(I, L^2(\Omega))$ by $F(t) \coloneqq \int_{0}^{t}f(s)\d s$ for all $t\in I$ and the mapping 
\begin{align}\label{definition Phi}
	&\Phi: L^\infty(\Omega) \to \bm{H}_N(\Div,\Omega), \quad \nu \mapsto\nabla y
 \\\notag
 &\text{where}\quad \int_{\Omega}\nabla y\cdot\nabla \phi\d x = \int_{\Omega} (\eta p_0 + \nu p_1) \phi\d x \quad \forall \phi\in H^1_D(\Omega).
\end{align}
Note that the variational problem in \cref{definition Phi} admits for every $\nu\in L^\infty(\Omega)$ a unique solution $y \in H^1_D(\Omega)$ due to the Lax-Milgram lemma. Furthermore, by the definition of $\bm H_N(\Div, \Omega)$, the above variational inequality immediatly implies that $\nabla y\in\bm H_N(\Div,\Omega)$ with $\Div(\nabla y) = - \nabla p_0 - \nu p_1$. This ensures that $\Phi: L^\infty(\Omega) \to \bm{H}_N(\Div,\Omega)$ is well-defined. In \cite{ammann23}, we have analyzed \cref{P intro} making use of the following auxiliary first-order system:
\begin{equation}\label{system state}
	\left\{\begin{aligned}
		&\nu\partial_tp + \Div\bm{u} + \eta p = F
		&&\text{in }I\times \Omega
		\\
		&\partial_t\bm{u} + \nabla p = \bm{0} 
		&&\text{in }I\times \Omega
		\\
		&p = 0 
		&&\text{on }I\times \Gamma_D
		\\
		&\bm{u} \cdot\bm{n} = 0 
		&&\text{on }I\times \Gamma_N
		\\
		&(p,\bm{u})(0) = (p_0,\Phi(\nu)) 
		&&\text{in }\Omega.
	\end{aligned}\right.
\end{equation}
As shown in \cite[Theorem 2.3]{ammann23}, the auxiliary first-order system \cref{system state} admits for every $\nu \in\mathcal{V}_{ad}$ a unique (classical) solution $(p,\bm u)\in C^1(I, L^2(\Omega)\times\bm L^2(\Omega))\cap C(I, H^1_D(\Omega)\times \bm H_N(\Div,\Omega))$, and the first solution component $p$ is the unique mild solution to the forward system \cref{system intro} in the sense of \cite[Defintion 1.2]{ammann23}. In the following, $S_p\colon\mathcal V_{ad} \to C^1(I, L^2(\Omega))\cap C(I, H^1_D(\Omega)), \nu \mapsto p$, denotes the corresponding solution mapping, assigning to every parameter $\nu$ the first solution component $p$ of the unique solution to \cref{system state}. Making use of this operator, \cref{P intro} can be equivalently written in the reduced formulation as follows:
\begin{equation}\tag{P}
	\label{P}
	\min_{\nu\in \mathcal{V}_{ad}} J(\nu) \coloneqq \mathcal{J}(\nu, S_p(\nu)) = 
	\frac{1}{2} \sum_{i=1}^m \int_I\int_{\Omega}a_i(S_p(\nu)-p^{ob}_i)^2\d x\d t+\frac{\lambda}{2}\|\nu\|_{L^2(\Omega)}^2.
\end{equation}

\section{\texorpdfstring{Fully Discrete Scheme for \cref{system state}}{}}
\label{section fully discrete}

This section is devoted to the construction and analysis of a fully discrete scheme for the governing state system \cref{system intro} based on the first-order approach \cref{system state}. In the following, let $\{\mathcal T_h\}_{h>0}$ denote a quasi-uniform family of triangulations for $\Omega$ such that every edge $E$ of every element $T\in \mathcal T_h$ satisfying $E\cap\partial\Omega\neq\emptyset$ belongs either to 
$\Gamma_D$ or $\overline{\Gamma_N}$, i.e., either $E\subset\Gamma_D$ or $E\subset\overline{\Gamma_N}$. Here, $h>0$ denotes the largest diameter of all elements $K\in \mathcal T_h$. Furthermore, $\text P_1^h$ stands for the space of all continuous piecewise linear functions associated with $\mathcal T_h$, i.e.,
\begin{equation*}
	\text P_1^h\coloneqq\{p\in C(\overline\Omega):\, p\vert_{K}\in\mathbb P_1(K)\quad\forall K\in\mathcal T_h\}.
\end{equation*}
The space of scalar-valued (resp. vector-valued) piecewise constant functions associated with $\mathcal T_h$ is denoted by $\DG_0^h$ (resp. $\DGbm_0^h$). Incorporating the mixed boundary condition in \cref{system state}, we introduce the subspace
\begin{equation*}
	\P_{1,D}^h\coloneqq \{\phi_h\in \text P_1^h:\, \phi_h = 0\,\,\text{on }\Gamma_D\} = \text P_1^h\cap H^1_D(\Omega),
\end{equation*}
where the latter identity follows from the characterization of $H^1_D(\Omega)$ in \cite[Theorem 2.1]{egert17}. Towards discretizing the system \cref{system state} in time, given some $N\in\mathbb N$, we choose the equidistant discretization 
\begin{equation*}
	0=t_0\leq t_1\leq t_2\leq\dots\leq t_N=T,
\end{equation*}
where $t_l-t_{l-1}=\tau=\frac{1}{N}$ for all $l=1,\dots,N$. Furthermore, let us employ the middle points of the constructed subintervals in the time discretization, i.e., $t_{l+ \frac{1}{2}}\coloneqq t_l +\frac{\tau}{2}$ for all $l=0,\dots, N-1$. We aim to construct an approximation for \cref{system state}, motivated by the leapfrog (Yee) time-stepping \cite{yee66}, where we evaluate the first equation in \cref{system state} at the discretization nodes and the second one at the middle points. This approach leads to approximations $\{p_h^l\}_{l=0}^N\subset\P_{1,D}^h$ and $\{\bm u_h^{l+\frac{1}{2}}\}_{l=0}^{N-1}\subset\DGbm_0^h$ where $p_h^l\approx p(t_l)$ and $\bm u_h^{l+\frac{1}{2}}\approx\bm u(t_{l+\frac{1}{2}})$ for the solution $(p, \bm u)$ to \cref{system state}. To arrive at a concise formulation, we introduce the notation
\begin{equation}\label{def deltaphk}
	\delta p_h^{l+\frac{1}{2}}\coloneqq \frac{p_h^{l+1} - p_h^l}{\tau},\quad p_h^{l+\frac{1}{2}}\coloneqq \frac{p_h^{l+1} + p_h^l}{2}\quad\forall l=0,\dots,N-1
\end{equation}
and 
\begin{equation}\label{def deltauhk}
	\delta\bm u_h^l\coloneqq\frac{\bm u_h^{l+\frac{1}{2}} - \bm u_h^{l-\frac{1}{2}}}{\tau}\quad\forall l=1,\dots,N-1.
\end{equation}
Furthermore, let $F^{l+\frac{1}{2}}\coloneqq F(t_{l+\frac{1}{2}})$ for all $l = 0,\dots, N-1$. We denote the $\P^h_1$-interpolation operator by
\begin{equation}\label{definition standard interpolation}
	\mathcal I_h\colon C(\overline\Omega)\to\P^h_1,\quad v\mapsto\sum_{j=1}^{M_h}v(x_j)\phi_{j},
\end{equation}
where $\{\phi_{j}\}_{j=1}^{M_h}\subset\P^h_1$ denotes the standard nodal basis of $\P^h_1$  associated with the corresponding nodal points $\{x_j\}_{j=1}^{M_h}\subset\overline\Omega$. The operator $\mathcal I_h$ satisfies $\mathcal I_h(C^\infty_D(\Omega))\subset\P_{1,D}^h$ and (cf. \cite[Section 4.4, p. 110]{brenner08})
\begin{align}\label{pih convergence1}
	&\mathcal I_h\phi\to\phi\quad\text{in } L^\infty(\Omega)
	&&\text{as }h\to 0\quad\forall \phi\in W^{1,p}(\Omega),\, p>N
	\\\label{pih convergence2}
	&\mathcal I_h\phi\to\phi\quad\text{in } H^1(\Omega)
	&&\text{as }h\to 0\quad\forall \phi\in H^2(\Omega).
\end{align}
Further, $\Psi_h$ denotes the $\P_{1,D}^h$-projection operator, i.e., $\Psi_h\colon H^1_D(\Omega)\to\P_{1,D}^h, p\mapsto y_h$ where $y_h$ solves
\begin{equation}\label{vi Psi}
	\int_\Omega\nabla y_h\cdot\nabla\phi_h + y_h\phi_h\d x = \int_\Omega\nabla p\cdot\nabla\phi_h + p\phi_h\d x\quad\forall \phi_h\in\P_{1,D}^h.
\end{equation}
Testing \cref{vi Psi} with $y_h - \phi_h$ for an arbitrarily fixed $\phi_h\in\P_{1,D}^h$, it holds that
\begin{align*}
	\|y_h - p\|_{H^1(\Omega)}^2 
 	&= (y_h - p,y_h - p)_{H^1(\Omega)} = (y_h - p, \phi_h - p)_{H^1(\Omega)}
	\\
	&\leq\|y_h - p\|_{H^1(\Omega)}\|\phi_h - p\|_{H^1(\Omega)}.
\end{align*}
Thus, for all $p\in H^1_D(\Omega)$, we have that
\begin{equation}\label{cea}
	\|\Psi_h p - p\|_{H^1(\Omega)}\leq \inf_{\phi_h\in\P_{1,D}^h}\|\phi_h - p\|_{H^1(\Omega)}.
\end{equation}
Furthermore, since $H^1_D(\Omega) = \overline{C^\infty_D(\Omega)}^{\|\cdot\|_{H^1(\Omega)}}$, for every $p\in H^1_D(\Omega)$ and for every $\epsilon>0$, there exists a $p_\epsilon\in C^\infty_D(\Omega)$ such that $\|p - p_\epsilon\|_{H^1(\Omega)}\leq\frac{\epsilon}{2}$, and due to \cref{pih convergence2}, there exists $ h_\epsilon>0$ such that $\|\mathcal I_hp_\epsilon - p_\epsilon\|_{H^1(\Omega)}\leq\frac{\epsilon}{2}$ for all $h\in (0,h_\epsilon]$. Then, along with \cref{cea} and since $\mathcal I_hp_\epsilon\in\P_{1,D}^h$, it follows for every $h\in (0,h_\epsilon]$ that
\begin{equation*}
	\|p - \Psi_h p\|_{H^1(\Omega)}\leq \|\mathcal I_h p_\epsilon - p\|_{H^1(\Omega)}\leq \|\mathcal I_h p_\epsilon - p_\epsilon\|_{H^1(\Omega)} + \|p_\epsilon - p\|_{H^1(\Omega)}\leq \epsilon.
\end{equation*}
This implies that 
\begin{equation}\label{convergence psi}
	\Psi_h p\to p\quad\text{in }H^1(\Omega)\quad\text{as }h\to 0\quad\forall p\in H^1_D(\Omega).
\end{equation}
Evaluating the first line in \cref{system state} at $t_{l+\frac{1}{2}}$ and the second line in \cref{system state} at $t_l$, and incorporating the continuous piecewise linear elements, we propose for every $\nu_h\in\mathcal V_{ad}$ the following fully discrete scheme:
\begin{equation}\label{leapfrog scheme}
	\left\{\begin{aligned}
		&\int_\Omega(\nu_h\delta p_h^{l+\frac{1}{2}} + \eta p_h^{l+\frac{1}{2}})\phi_h - \bm u_h^{l + \frac{1}{2}}\cdot\nabla\phi_h\d x = \int_\Omega F^{l+\frac{1}{2}}\phi_h\d x
		\\
		&\begin{aligned}
			&\phantom{\hspace{5cm}}
			&&\forall \phi_h\in\P_{1,D}^h, l=0,\dots, N-1
			\\
			&\delta\bm u_h^l + \nabla p_h^l = 0
			&&\forall l=1,\dots, N-1
			\\
			&p_h^0\coloneqq\Psi_hp_0,\quad \bm u_h^{\frac{1}{2}} \coloneqq\Phi_h(\nu_h).
		\end{aligned}
	\end{aligned}\right.
\end{equation}
Here, $\Phi_h\colon L^\infty(\Omega)\to \DGbm^h_0$ maps every $\nu\in L^\infty(\Omega)$ to $\nabla y_h$ where $y_h\in\P_{1,D}^h$ denotes the unique solution to 
\begin{equation}\label{definition Phi h}
	\int_\Omega\nabla y_h\cdot\nabla\phi_h\d x = \int_\Omega(\eta p_0 + \nu p_1)\phi_h\d x\quad\forall\phi_h\in\P_{1,D}^h.
\end{equation}
The system \cref{leapfrog scheme} can be solved by alternately solving the variational problem and computing the second line as follows: Suppose that $p_h^l\in\P_{1, D}^h$ and $\bm u_h^{l + \frac{1}{2}}\in\DGbm^h_0$ are already given for some $l=0,\dots, N-2$. Then, applying \cref{def deltaphk}, the finite-dimensional variational problem in \cref{leapfrog scheme} admits a unique solution $p_h^{l+1}\in\P_{1, D}^h$ due to the Lax-Milgram lemma. Furthermore, due to \cref{def deltauhk} and the second line in \cref{leapfrog scheme}, it follows that $\bm u_h^{l+\frac{3}{2}}= \bm u_h^{l+\frac{1}{2}} - \tau\nabla p_h^{l+1}$. Consequently, the iteration scheme \cref{leapfrog scheme} is well-defined.

\subsection{Stability Analysis}

Under a suitable {CFL}-condition, we prove a stability result for the discrete scheme \cref{leapfrog scheme}. Furthermore, we make use of the following inverse estimate:

\begin{lemma}[{\cite[Theorem 3.2.6]{ciarlet02}}]\label{lemma inverse estimate}
	There exists a constant $c_{inv}>0$, independent of $h$, such that
	\begin{equation*}
		\|\nabla p_h\|_{\bm L^2(\Omega)}\leq \frac{c_{inv}}{h}\|p_h\|_{L^2(\Omega)}\quad\forall p_h\in\P_{1}^h.
	\end{equation*}
\end{lemma}

\begin{theorem}\label{theorem stability}
	Let \cref{assumption} hold. Furthermore, let $h>0$ and $N\in\mathbb N$ satisfy the {CFL}-condition
	\begin{equation}\label{cfl}
		\frac{1}{Nh} = \frac{\tau}{h}\leq c_{cfl}\coloneqq \frac{\sqrt{\nu_-}}{\sqrt{2}c_{inv}}.
	\end{equation}
	Then, there exists a constant $C>0$, independent $h$ and $N$, such that for every $\nu_h\in\mathcal V_{ad}$, the associated unique solution $(\{p_h^l\}_{l=0}^N, \{\bm u_h^{l+\frac{1}{2}}\}_{l=0}^{N-1})\in(\P_{1,D}^h)^{N+1}\times(\DGbm_0^h)^N$ to the leapfrog scheme \cref{leapfrog scheme} satisfies
	\begin{align}\label{boundedness claim 1}
		\max_{l\in\{0,\dots,N-1\}}\|\delta p_h^{l+\frac{1}{2}}\|_{L^2(\Omega)} + \max_{l\in\{1,\dots,N-1\}}\|\delta\bm u_h^l\|_{\bm L^2(\Omega)} + \max_{l\in\{1,\dots,N-1\}}\|\nabla p_h^l\|_{\bm L^2(\Omega)} \leq C
		\\\label{boundedness claim 2}
		\max_{l\in\{0,\dots,N\}}\|p_h^l\|_{L^2(\Omega)} + \max_{l\in\{0,\dots,N-1\}}\|\bm u_h^{l+\frac{1}{2}}\|_{L^2(\Omega)}\leq C.
	\end{align}
\end{theorem}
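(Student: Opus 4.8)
The plan is to carry out, at the fully discrete level, the energy argument that produces higher temporal regularity for the first--order system \eqref{system state}: differentiate the equations in time and test the velocity equation with the time derivative of the pressure. For the staggered scheme \eqref{leapfrog scheme}, ``differentiating in time'' means subtracting the first line at index $l-\tfrac12$ from the one at index $l+\tfrac12$, dividing by $\tau$, and inserting $\delta\bm u_h^l=-\nabla p_h^l$ from the second line together with $\tfrac1\tau(p_h^{l+\frac12}-p_h^{l-\frac12})=\tfrac12(\delta p_h^{l+\frac12}+\delta p_h^{l-\frac12})$ and \eqref{def deltauhk}. Abbreviating $q_h^{l+\frac12}\coloneqq\delta p_h^{l+\frac12}\in\P_{1,D}^h$ and $\delta F^l\coloneqq\tfrac1\tau(F^{l+\frac12}-F^{l-\frac12})$, this yields for $l=1,\dots,N-1$ the discrete wave equation
\begin{equation*}
	\int_\Omega\Bigl(\nu_h\tfrac{q_h^{l+\frac12}-q_h^{l-\frac12}}{\tau}+\eta\,\tfrac{q_h^{l+\frac12}+q_h^{l-\frac12}}{2}\Bigr)\phi_h+\nabla p_h^l\cdot\nabla\phi_h\,\d x=\int_\Omega\delta F^l\phi_h\,\d x\qquad\forall\phi_h\in\P_{1,D}^h .
\end{equation*}

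I would then test this identity with $\phi_h=\tfrac\tau2(q_h^{l+\frac12}+q_h^{l-\frac12})\in\P_{1,D}^h$ and sum over $l=1,\dots,L$ for an arbitrary $L\in\{1,\dots,N-1\}$. The first term telescopes to $\tfrac12\int_\Omega\nu_h(|q_h^{L+\frac12}|^2-|q_h^{\frac12}|^2)\,\d x$; the $\eta$-term is nonnegative; and, using $\tau(\nabla q_h^{l+\frac12}+\nabla q_h^{l-\frac12})=\nabla p_h^{l+1}-\nabla p_h^{l-1}$, the gradient term telescopes to $\tfrac12\int_\Omega(\nabla p_h^L\cdot\nabla p_h^{L+1}-\nabla p_h^0\cdot\nabla p_h^1)\,\d x$. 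Setting $\mathcal E^{L+\frac12}\coloneqq\int_\Omega\nu_h|q_h^{L+\frac12}|^2+\nabla p_h^L\cdot\nabla p_h^{L+1}\,\d x$, this gives $\mathcal E^{L+\frac12}\le\mathcal E^{\frac12}+\sum_{l=1}^L\tau\int_\Omega\delta F^l(q_h^{l+\frac12}+q_h^{l-\frac12})\,\d x$. The {CFL}-condition enters precisely here: since $\nabla p_h^{L+1}-\nabla p_h^L=\tau\nabla q_h^{L+\frac12}$, the inverse inequality $\|\nabla v_h\|_{\bm L^2(\Omega)}\le c_{inv}h^{-1}\|v_h\|_{L^2(\Omega)}$ on $\P_1^h$ and \eqref{cfl} give $\tfrac{\tau^2}2\|\nabla q_h^{L+\frac12}\|_{\bm L^2(\Omega)}^2\le\tfrac{\nu_-}4\|q_h^{L+\frac12}\|_{L^2(\Omega)}^2\le\tfrac14\int_\Omega\nu_h|q_h^{L+\frac12}|^2\,\d x$, so the polarization identity $\nabla p_h^L\cdot\nabla p_h^{L+1}=\tfrac12|\nabla p_h^L|^2+\tfrac12|\nabla p_h^{L+1}|^2-\tfrac{\tau^2}2|\nabla q_h^{L+\frac12}|^2$ yields the coercivity $\mathcal E^{L+\frac12}\ge\tfrac{3\nu_-}4\|q_h^{L+\frac12}\|_{L^2(\Omega)}^2+\tfrac12\|\nabla p_h^L\|_{\bm L^2(\Omega)}^2+\tfrac12\|\nabla p_h^{L+1}\|_{\bm L^2(\Omega)}^2$.

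It then remains to bound $\mathcal E^{\frac12}$ uniformly and to close the recursion. For the former, $\|\nabla p_h^0\|_{\bm L^2(\Omega)}\le\|\Psi_hp_0\|_{H^1(\Omega)}\le2\|p_0\|_{H^1(\Omega)}$ by \eqref{cea} with $\phi_h=0$, and testing \eqref{definition Phi h} with $y_h$ (plus the Poincar\'e inequality on $H^1_D(\Omega)$) bounds $\|\bm u_h^{\frac12}\|_{\bm L^2(\Omega)}=\|\Phi_h(\nu_h)\|_{\bm L^2(\Omega)}$ uniformly in $\nu_h\in\mathcal V_{ad}$; for the first pressure increment I would evaluate the first line of \eqref{leapfrog scheme} at $l=0$, use \eqref{definition Phi h} to rewrite $\int_\Omega\bm u_h^{\frac12}\cdot\nabla\phi_h\,\d x=\int_\Omega(\eta p_0+\nu_hp_1)\phi_h\,\d x$ (the discrete counterpart of $\partial_tp(0)=p_1$), test with $\delta p_h^{\frac12}$, and substitute $p_h^{\frac12}=p_h^0+\tfrac\tau2\delta p_h^{\frac12}$ so that the arising term $\tfrac\tau2\int_\Omega\eta|\delta p_h^{\frac12}|^2\,\d x$ has the favourable sign; this gives $\|\delta p_h^{\frac12}\|_{L^2(\Omega)}\le C$ uniformly, and hence $\mathcal E^{\frac12}\le C$ using also $\|\nabla p_h^1\|_{\bm L^2(\Omega)}\le\|\nabla p_h^0\|_{\bm L^2(\Omega)}+c_{inv}c_{cfl}\|\delta p_h^{\frac12}\|_{L^2(\Omega)}$. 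To close the recursion, I would estimate the forcing term by $C\|f\|_{L^1(I,L^2(\Omega))}\sqrt{M_L}$ with $M_L\coloneqq\max_{0\le k\le L}\mathcal E^{k+\frac12}$, using $\sum_{l=1}^L\tau\|\delta F^l\|_{L^2(\Omega)}=\sum_{l=1}^L\|F^{l+\frac12}-F^{l-\frac12}\|_{L^2(\Omega)}\le\|f\|_{L^1(I,L^2(\Omega))}$ and the coercivity; passing to the maximum over $L'\le L$ gives $M_L\le\mathcal E^{\frac12}+C\|f\|_{L^1(I,L^2(\Omega))}\sqrt{M_L}$, so $M_L\le2\mathcal E^{\frac12}+C^2\|f\|_{L^1(I,L^2(\Omega))}^2$, a bound independent of $h$ and $N$ (no discrete Gr\"onwall lemma is needed, as $f$ enters only through its $L^1(I,L^2(\Omega))$-norm). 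The coercivity then yields \eqref{boundedness claim 1}, and \eqref{boundedness claim 2} follows by telescoping $p_h^l=p_h^0+\tau\sum_{k=0}^{l-1}\delta p_h^{k+\frac12}$ and $\bm u_h^{l+\frac12}=\bm u_h^{\frac12}+\tau\sum_{k=1}^l\delta\bm u_h^k$ and using $N\tau=T$.

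The step I expect to be the main obstacle is identifying the right discrete energy. Testing \eqref{leapfrog scheme} directly with $\delta p_h^{l+\frac12}$ is unsuccessful: after summation by parts in time it produces a term $-\sum_l\tau\|\nabla p_h^l\|_{\bm L^2(\Omega)}^2$ of the wrong sign with nothing to absorb it, so the scheme must first be differenced in time, after which the coercivity of $\mathcal E^{l+\frac12}$ hinges on absorbing $\tfrac{\tau^2}2\|\nabla q_h^{l+\frac12}\|^2$ into $\int_\Omega\nu_h|q_h^{l+\frac12}|^2$ --- exactly what \eqref{cfl} with the inverse-inequality constant $c_{inv}$ provides. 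A secondary subtlety is the $l=0$ initialization, which must exploit the specific choice $\bm u_h^{\frac12}=\Phi_h(\nu_h)$ to produce a uniform bound on $\|\delta p_h^{\frac12}\|_{L^2(\Omega)}$.
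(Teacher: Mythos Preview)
Your argument is correct and follows the same core strategy as the paper: difference the first equation of \eqref{leapfrog scheme} between consecutive time levels, test with the centered sum $\delta p_h^{l+\frac12}+\delta p_h^{l-\frac12}$, sum in $l$, and use the CFL condition \eqref{cfl} together with the inverse estimate to absorb the dangerous cross-term. The treatment of the initial step via \eqref{definition Phi h} is also the same.

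Where you differ is in packaging and in the closing step. By substituting $\delta\bm u_h^l=-\nabla p_h^l$ at the outset you obtain the transparent discrete energy $\mathcal E^{L+\frac12}=\int_\Omega\nu_h|\delta p_h^{L+\frac12}|^2+\nabla p_h^L\cdot\nabla p_h^{L+1}\,\d x$, whose telescoping and coercivity (via the polarization identity and \eqref{cfl}) are immediate; the paper instead keeps the $\bm u$-increments and arrives at the equivalent control of $\|\delta\bm u_h^{N_0}\|^2$ through a longer summation-by-parts computation. More substantively, you close the estimate with the elementary $\sqrt{M_L}$-trick ($M_L\le\mathcal E^{\frac12}+C\|f\|_{L^1(I,L^2(\Omega))}\sqrt{M_L}\Rightarrow M_L\le 2\mathcal E^{\frac12}+C^2\|f\|_{L^1(I,L^2(\Omega))}^2$), whereas the paper splits the indices according to whether $\|\delta p_h^{l+\frac12}\|_{L^2(\Omega)}\ge 1$ and then invokes a discrete Gr\"onwall lemma. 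Your route is shorter and yields an explicit polynomial dependence of the constant on $\|f\|_{L^1(I,L^2(\Omega))}$ rather than the exponential one produced by Gr\"onwall; the paper's route, on the other hand, keeps the $\bm u$-variable visible throughout, which is closer to how the continuous energy estimate for \eqref{system state} is usually written.
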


\begin{proof}
	To begin with, we investigate the initial approximations. Testing the first line in \cref{leapfrog scheme} for $l=0$ with $\phi_h = p_h^{\frac{1}{2}}$ leads to 
	\begin{align*}
			&\int_\Omega(\nu_h\delta p_h^{\frac{1}{2}} + \eta p_h^{\frac{1}{2}} )p_h^{\frac{1}{2}} - \bm u_h^{\frac{1}{2}} \cdot\nabla p_h^{\frac{1}{2}} \d x = \int_\Omega F^{\frac{1}{2}} p_h^{\frac{1}{2}} \d x
			\\
			\underbrace{\Leftrightarrow}_{\cref{def deltaphk}}
			&\int_\Omega\frac{\nu_h}{2\tau}((p_h^1)^2 - (p_h^0)^2) + \eta (p_h^{\frac{1}{2}})^2 - \bm u_h^{\frac{1}{2}} \cdot\nabla p_h^{\frac{1}{2}} \d x = \int_\Omega F^{\frac{1}{2}} p_h^{\frac{1}{2}} \d x.
	\end{align*}
	Therefore, along with the inverse estimate from \cref{lemma inverse estimate}, it follows that
	\begin{align*}
		\frac{\nu_-}{\tau}\|p_h^{\frac{1}{2}} \|_{L^2(\Omega)}^2
		&\leq \int_\Omega\frac{\nu_h}{\tau}(p_h^\frac{1}{2})^2\d x = \int_\Omega\frac{\nu_h}{\tau}\left(\frac{p_h^1 + p_h^0}{2}\right)^2\d x \leq \int_\Omega\frac{\nu_h}{2\tau}((p_h^1)^2 + (p_h^0)^2)\d x
		\\
		&\leq\int_\Omega \frac{\nu_h}{\tau}(p_h^0)^2 + \bm u_h^{\frac{1}{2}} \cdot\nabla p_h^{\frac{1}{2}} \d x + F^{\frac{1}{2}} p_h^{\frac{1}{2}} \d x 
		\\
		&\leq \frac{\nu_+}{\tau}\|p_h^0\|_{L^2(\Omega)}^2 + \left(\frac{c_{inv}}{h}\|\bm u_h^{\frac{1}{2}} \|_{\bm L^2(\Omega)} + \|F^{\frac{1}{2}} \|_{L^2(\Omega)}\right) \|p_h^{\frac{1}{2}} \|_{L^2(\Omega)}.
	\end{align*}
	Then, applying Young's inequality, we obtain that 
	\begin{equation*}
		\frac{\nu_-}{\tau}\|p_h^{\frac{1}{2}} \|_{L^2(\Omega)}^2\leq \frac{\nu_+}{\tau}\|p_h^0\|_{L^2(\Omega)}^2 + \frac{\tau}{2\nu_-}\left(\frac{c_{inv}}{h}\|\bm u_h^{\frac{1}{2}} \|_{\bm L^2(\Omega)} \!+\! \|F^{\frac{1}{2}} \|_{L^2(\Omega)}\right)^2 \!+ \frac{\nu_-}{2\tau}\|p_h^{\frac{1}{2}} \|_{L^2(\Omega)}^2,
	\end{equation*}
	such that 
	\begin{align*}
		\frac{\nu_-}{2\tau}\|p_h^{\frac{1}{2}} \|_{L^2(\Omega)}^2
		&\leq \frac{\nu_+}{\tau}\|p_h^0\|_{L^2(\Omega)}^2 + \frac{\tau}{2\nu_-}\left(\frac{c_{inv}}{h}\|\bm u_h^{\frac{1}{2}} \|_{\bm L^2(\Omega)} + \|F^{\frac{1}{2}} \|_{L^2(\Omega)}\right)^2
		\\
		&\leq \frac{\nu_+}{\tau}\|p_h^0\|_{L^2(\Omega)}^2 + \frac{\tau c_{inv}^2}{\nu_-h^2}\|\bm u_h^{\frac{1}{2}} \|_{\bm L^2(\Omega)}^2 + \frac{\tau}{\nu_-}\|F^{\frac{1}{2}} \|_{L^2(\Omega)}^2.
	\end{align*}
	Multiplying both sides with $\frac{2\tau}{\nu_-}$ and making use of the CFL-condition \cref{cfl} yields that 
	\begin{align}\label{ieq ph12}
		\|p_h^{\frac{1}{2}} \|_{L^2(\Omega)}^2
		&\leq\frac{2\nu_+}{\nu_-}\|p_h^0\|_{L^2(\Omega)}^2 + \frac{2\tau^2 c_{inv}^2}{\nu_-^2h^2}\|\bm u_h^{\frac{1}{2}} \|_{\bm L^2(\Omega)}^2 + \frac{2\tau^2}{\nu_-^2}\|F^{\frac{1}{2}} \|_{L^2(\Omega)}^2
		\\\notag
		&\hspace{-.52cm}\underbrace{\leq}_{\cref{leapfrog scheme},\cref{cfl}}\frac{2\nu_+}{\nu_-}\|\Psi_hp_0\|_{L^2(\Omega)}^2 + \frac{1}{\nu_-}\|\Phi_h(\nu_h)\|_{\bm L^2(\Omega)}^2 + \frac{2T^2}{\nu_-^2}\|f\|_{L^1(I, L^2(\Omega))}^2,
	\end{align}
	where the inequality $\|F^{\frac{1}{2}}\|_{L^2(\Omega)}\leq\|f\|_{L^1(I, L^2(\Omega))}$ follows from $F^{\frac{1}{2}} = F(t_{\frac{1}{2}})= \int_0^{t_\frac{1}{2}}f(s)\d s$. Testing \cref{definition Phi h} with $\phi_h = \nabla y_h$ and utilizing the Poincaré inequality, we obtain the boundedness of $\{\Phi_h(\nu_h)\}_{h> 0}\subset\bm L^2(\Omega)$. Furthermore, $\{\Psi_hp_0\}_{h>0}\subset L^2(\Omega)$ is bounded due to \cref{convergence psi}. Altogether, concluding from \cref{ieq ph12}, we obtain the boundedness of $\{p_h^{\frac{1}{2}} \}_{h>0}\subset L^2(\Omega)$. Now, testing the first line in \cref{leapfrog scheme} for $l=0$ with $\phi_h = \delta p_h^{\frac{1}{2}}$, it holds that
	\begin{align*}
		\nu_-\|\delta p_h^{\frac{1}{2}} \|_{L^2(\Omega)}^2
		&\leq \int_\Omega\nu_h\delta (p_h^{\frac{1}{2}})^2\d x \underbrace{=}_{\cref{leapfrog scheme}} \int_\Omega (F^{\frac{1}{2}} - \eta p_h^{\frac{1}{2}} )\delta p_h^{\frac{1}{2}} + \Phi_h(\nu_h)\cdot\nabla \delta p_h^{\frac{1}{2}} \d x
		\\
		&\hspace{-.18cm}\underbrace{=}_{\cref{definition Phi h}}\int_\Omega (F^{\frac{1}{2}} - \eta p_h^{\frac{1}{2}} )\delta p_h^{\frac{1}{2}} + (\eta p_0 + \nu_h p_1)\delta p_h^{\frac{1}{2}} \d x
		\\
		&\leq(\|f\|_{L^1(I, L^2(\Omega))} + \|\eta\|_{L^\infty(\Omega)}(\|p_h^{\frac{1}{2}} \|_{L^2(\Omega)} + \|p_{0}\|_{L^2(\Omega)}) 
		\\
		&\qquad+ \nu_+\|p_1\|_{L^2(\Omega)})\|\delta p_h^{\frac{1}{2}} \|_{L^2(\Omega)},
	\end{align*}
	which leads to the boundedness of $\{\delta p_h^{\frac{1}{2}} \}_{h>0}\subset L^2(\Omega)$. Furthermore, it holds that
	\begin{align*}
		\|\delta\bm u_h^1\|_{\bm L^2(\Omega)}
		&\underbrace{=}_{\cref{leapfrog scheme}} \|\nabla p_h^1\|_{\bm L^2(\Omega)}\underbrace{=}_{\cref{def deltaphk}}\|\nabla(p_h^0 + \tau\delta p_h^{\frac{1}{2}} )\|_{\bm L^2(\Omega)}
		\\
		&\hspace{-.18cm}\underbrace{\leq}_{\text{Lem.}\ref{lemma inverse estimate}}\|\nabla\Psi_h(p_0)\|_{\bm L^2(\Omega)} + \frac{c_{inv}\tau}{h}\|\delta p_h^{\frac{1}{2}} \|_{L^2(\Omega)}
		\\
		&\hspace{-.05cm}\underbrace{\leq}_{\cref{cfl}}\|\nabla\Psi_h(p_0)\|_{\bm L^2(\Omega)} + c_{inv}c_{cfl}\|\delta p_h^{\frac{1}{2}} \|_{L^2(\Omega)}.
	\end{align*}
	Along with \cref{convergence psi}, the above inequality implies the boundedness of $\{\delta\bm u_h^1\}_{h>0}\subset\bm L^2(\Omega)$. Now, let $l\in\{1,\dots, N-1\}$ be arbitrarily fixed. Testing the first line in \cref{leapfrog scheme} for $l$ and $l-1$ with $\phi_h = \delta p_h^{l+\frac{1}{2}} + \delta p_h^{l-\frac{1}{2}}$ leads to
	\begin{align}\label{first ieq}
		&\int_\Omega(\nu_h\delta p_h^{l+\frac{1}{2}} + \eta p_h^{l+\frac{1}{2}})(\delta p_h^{l+\frac{1}{2}} + \delta p_h^{l-\frac{1}{2}}) - \bm u_h^{l + \frac{1}{2}}\cdot\nabla(\delta p_h^{l+\frac{1}{2}} + \delta p_h^{l-\frac{1}{2}})\d x
		\\\notag
		&=\int_\Omega F^{l+\frac{1}{2}}(\delta p_h^{l+\frac{1}{2}} + \delta p_h^{l-\frac{1}{2}})\d x
	\end{align}
	and 
	\begin{align}\label{second ieq}
		&\int_\Omega(\nu_h\delta p_h^{l-\frac{1}{2}} + \eta p_h^{l-\frac{1}{2}})(\delta p_h^{l+\frac{1}{2}} + \delta p_h^{l-\frac{1}{2}}) - \bm u_h^{l-\frac{1}{2}}\cdot\nabla(\delta p_h^{l+\frac{1}{2}} + \delta p_h^{l-\frac{1}{2}})\d x 
		\\\notag
		&=\int_\Omega F^{l-\frac{1}{2}}(\delta p_h^{l+\frac{1}{2}} + \delta p_h^{l-\frac{1}{2}})\d x.
	\end{align}
	Subtracting \cref{second ieq} from \cref{first ieq} provides that 
	\begin{align}\label{big sum}
		&\int_\Omega\nu_h(\delta p_h^{l+\frac{1}{2}} -
		\delta p_h^{l-\frac{1}{2}})(\delta p_h^{l+\frac{1}{2}} + \delta p_h^{l-\frac{1}{2}}) + \eta(p_h^{l+\frac{1}{2}} - p_h^{l-\frac{1}{2}})(\delta p_h^{l+\frac{1}{2}} + \delta p_h^{l-\frac{1}{2}})\d x
		\\\notag
		&= \int_\Omega(\bm u_h^{l + \frac{1}{2}} - \bm u_h^{l-\frac{1}{2}})\cdot\nabla(\delta p_h^{l+\frac{1}{2}} + \delta p_h^{l-\frac{1}{2}}) + (F^{l+\frac{1}{2}} - F^{l-\frac{1}{2}})(\delta p_h^{l+\frac{1}{2}} + \delta p_h^{l-\frac{1}{2}})\d x .
	\end{align}
	Now, for some arbitrary $N_0\in\{1,\dots, N-1\}$, we sum up the equation \cref{big sum} for $k =1,\dots, N_0$ and investigate the resulting terms seperately. For the first term, we obtain that
	\begin{align}\label{estimate 1}
		&\sum_{l=1}^{N_0}\int_\Omega\nu_h(\delta p_h^{l+\frac{1}{2}} -
		\delta p_h^{l-\frac{1}{2}})(\delta p_h^{l+\frac{1}{2}} + \delta p_h^{l-\frac{1}{2}})\d x
		\\
		&\notag= \|\sqrt{\nu_h}\delta p_h^{N_0+\frac{1}{2}}\|_{L^2(\Omega)}^2 - \|\sqrt{\nu_h}\delta p_h^{\frac{1}{2}} \|_{L^2(\Omega)}^2\geq \nu_-\|\delta p_h^{N_0+\frac{1}{2}}\|_{L^2(\Omega)}^2 - \nu_+\|\delta p_h^{\frac{1}{2}} \|_{L^2(\Omega)}^2.
	\end{align}
	Furthermore, it holds that 
	\begin{equation}\label{estimate 2}
		\sum_{l=1}^{N_0}\int_\Omega\eta(p_h^{l+\frac{1}{2}} - p_h^{l-\frac{1}{2}})(\delta p_h^{l+\frac{1}{2}} + \delta p_h^{l-\frac{1}{2}})\d x\underbrace{=}_{\cref{def deltaphk}}\frac{\tau}{2}\sum_{l=1}^{N_0}\int_\Omega\eta(\delta p_h^{l+\frac{1}{2}} + \delta p_h^{l-\frac{1}{2}})^2\d x\geq 0.
	\end{equation}
	Moreover, we have that
	\begin{align}\label{long estimate}
		&\sum_{l=1}^{N_0}\int_\Omega(\bm u_h^{l + \frac{1}{2}} - \bm u_h^{l-\frac{1}{2}})\cdot\nabla(\delta p_h^{l+\frac{1}{2}} + \delta p_h^{l-\frac{1}{2}})\d x
		\\\notag
 		&\hspace{-.15cm}\underbrace{=}_{\cref{def deltauhk}}\tau\sum_{l=1}^{N_0}\int_\Omega\delta\bm u_h^l\cdot\nabla(\delta p_h^{l+\frac{1}{2}} + \delta p_h^{l-\frac{1}{2}})\d x 
		\\\notag
		&\hspace{-.15cm}\underbrace{=}_{\cref{def deltaphk}} \sum_{l=1}^{N_0}\int_\Omega\delta\bm u_h^l\cdot\nabla(p_h^{l+1} - p_h^l) + \delta\bm u_h^l\cdot\nabla(p_h^l - p_h^{l-1})\d x
		\\\notag
		&\hspace{-.15cm}\underbrace{=}_{\cref{leapfrog scheme}} \int_\Omega-\sum_{l=1}^{N_0-1}\delta\bm u_h^l\cdot(\delta\bm u_h^{l+1} - \delta\bm u_h^l) - \sum_{l=2}^{N_0} \delta\bm u_h^l\cdot(\delta\bm u_h^l - \delta\bm u_h^{l-1})\d x
		\\\notag
		&\quad + \int_\Omega \delta\bm u_h^{N_0}\cdot\nabla(p_h^{N_0 + 1} - p_h^{N_0})+\delta\bm u_h^1\cdot\nabla(p_h^1 - p_h^0)\d x
		\\\notag
		&= \int_\Omega-\sum_{l=1}^{N_0-1}\delta\bm u_h^l\cdot(\delta\bm u_h^{l+1} - \delta\bm u_h^l) - \sum_{l=1}^{N_0-1} \delta\bm u_h^{l+1}\cdot(\delta\bm u_h^{l+1} - \delta\bm u_h^l)\d x
		\\\notag
		&\quad + \tau\int_\Omega\delta\bm u_h^{N_0}\cdot\nabla\delta p_h^{N_0 + \frac{1}{2}} + \delta\bm u_h^1\nabla\delta p_h^{\frac{1}{2}} \d x
		\\\notag
		&= \int_\Omega-\sum_{l=1}^{N_0-1}(\delta\bm u_h^{l+1} + \delta\bm u_h^l)\cdot(\delta\bm u_h^{l+1} - \delta\bm u_h^l)\d x
		\\\notag
		&\quad+ \tau\int_\Omega\delta\bm u_h^{N_0}\cdot\nabla\delta p_h^{N_0 + \frac{1}{2}} + \delta\bm u_h^1\nabla\delta p_h^{\frac{1}{2}} \d x
		\\\notag
		& = -\|\delta\bm u_h^{N_0}\|_{\bm L^2(\Omega)}^2 +\|\delta\bm u_h^1\|_{\bm L^2(\Omega)}^2 + \tau\int_\Omega\delta\bm u_h^{N_0}\cdot\nabla\delta p_h^{N_0 + \frac{1}{2}} + \delta\bm u_h^1\nabla\delta p_h^{\frac{1}{2}} \d x
		\\\notag
		&\leq -\|\delta\bm u_h^{N_0}\|_{\bm L^2(\Omega)}^2 +\|\delta\bm u_h^1\|_{\bm L^2(\Omega)}^2 + \frac{1}{2}\|\delta\bm u_h^{N_0}\|_{\bm L^2(\Omega)}^2 + \frac{\tau^2}{2}\|\nabla\delta p_h^{N_0 + \frac{1}{2}}\|_{\bm L^2(\Omega)}^2
		\\\notag
		&\quad+ \frac{1}{2}\|\delta\bm u_h^1\|_{\bm L^2(\Omega)}^2 + \frac{\tau^2}{2}\|\nabla\delta p_h^{\frac{1}{2}} \|_{\bm L^2(\Omega)}^2
		\\\notag
		&\hspace{-.31cm}\underbrace{\leq}_{\text{Lem.} \ref{lemma inverse estimate}} - \frac{1}{2}\|\delta\bm u_h^{N_0}\|_{\bm L^2(\Omega)}^2 + \frac{3}{2}\|\delta\bm u_h^1\|_{\bm L^2(\Omega)}^2 + \frac{c_{inv}^2\tau^2}{2h^2}\|\delta p_h^{N_0 + \frac{1}{2}}\|_{L^2(\Omega)}^2 + \frac{c_{inv}^2\tau^2}{2h^2}\|\delta p_h^{\frac{1}{2}} \|_{L^2(\Omega)}^2
		\\\notag
		&\hspace{-.18cm}\underbrace{\leq}_{\cref{cfl}} - \frac{1}{2}\|\delta\bm u_h^{N_0}\|_{\bm L^2(\Omega)}^2 + \frac{3}{4}\|\delta\bm u_h^1\|_{\bm L^2(\Omega)}^2 + \frac{\nu_-}{4}\|\delta p_h^{N_0 + \frac{1}{2}}\|_{L^2(\Omega)}^2 + \frac{\nu_-}{4}\|\delta p_h^{\frac{1}{2}} \|_{L^2(\Omega)}^2.
	\end{align}
	Taking the last term from \cref{big sum} into account, we observe that 
	\begin{align}\label{ieq F1}
		&\sum_{l=1}^{N_0}\int_\Omega(F^{l+\frac{1}{2}} - F^{l-\frac{1}{2}})(\delta p_h^{l+\frac{1}{2}} + \delta p_h^{l-\frac{1}{2}})\d x 
		\\\notag
		&\leq \sum_{l=1}^{N_0}\|F^{l+\frac{1}{2}} - F^{l-\frac{1}{2}}\|_{L^2(\Omega)}(\|\delta p_h^{l+\frac{1}{2}}\|_{L^2(\Omega)} + \|\delta p_h^{l-\frac{1}{2}}\|_{L^2(\Omega)})
		\\\notag
		&=\sum_{l=1}^{N_0}\|F^{l+\frac{1}{2}} - F^{l-\frac{1}{2}}\|_{L^2(\Omega)}\|\delta p_h^{l+\frac{1}{2}}\|_{L^2(\Omega)} + \sum_{l=0}^{N_0-1}\|F^{l+ \frac{3}{2}} - F^{l+\frac{1}{2}}\|_{L^2(\Omega)}\|\delta p_h^{l+\frac{1}{2}}\|_{L^2(\Omega)}
		\\\notag
		&=\sum_{l=1}^{N_0-1}(\|F^{l+\frac{1}{2}} - F^{l-\frac{1}{2}}\|_{L^2(\Omega)} + \|F^{l+ \frac{3}{2}} - F^{l+\frac{1}{2}}\|_{L^2(\Omega)})\|\delta p_h^{l+\frac{1}{2}}\|_{L^2(\Omega)}
		\\\notag
		&\quad + \|F^{N_0+\frac{1}{2}} - F^{N_0-\frac{1}{2}}\|_{L^2(\Omega)}\|\delta p_h^{N_0+\frac{1}{2}}\|_{L^2(\Omega)}+ \|F^{\frac{3}{2}} - F^{\frac{1}{2}} \|_{L^2(\Omega)}\|\delta p_h^{\frac{1}{2}} \|_{L^2(\Omega)}.
	\end{align}
	Since $F(t)= \int_0^{t}f(s)\d s$ for every $t\in I$, it holds for all $l\in \{1,\dots,N-1\}$ that 
	\begin{equation}\label{ieq F2}
		\|F^{l+\frac{1}{2}} - F^{l-\frac{1}{2}}\|_{L^2(\Omega)} = \left\|\int_{t_{l-\frac{1}{2}}}^{t_{l+\frac{1}{2}}}f(s)\d s\right\|\leq \|f\|_{L^1(I, L^2(\Omega))}.
	\end{equation}
	Applying \cref{ieq F2} to \cref{ieq F1} along with Young's inequality, it follows that
	\begin{align}\label{ieq F3}
		&\sum_{l=1}^{N_0}\int_\Omega(F^{l+\frac{1}{2}} - F^{l-\frac{1}{2}})(\delta p_h^{l+\frac{1}{2}} + \delta p_h^{l-\frac{1}{2}})\d x 
		\\\notag
		&\leq\sum_{l=1}^{N_0-1}(\|F^{l+\frac{1}{2}} - F^{l-\frac{1}{2}}\|_{L^2(\Omega)} + \|F^{l+ \frac{3}{2}} - F^{l+\frac{1}{2}}\|_{L^2(\Omega)})\|\delta p_h^{l+\frac{1}{2}}\|_{L^2(\Omega)}
		\\\notag
		&\quad + \frac{2}{\nu_-}\|f\|_{L^1(I, L^2(\Omega))}^2 + \frac{\nu_-}{4}\|\delta p_h^{N_0+\frac{1}{2}}\|_{L^2(\Omega)}^2 + \frac{\nu_-}{4}\|\delta p_h^{\frac{1}{2}} \|_{L^2(\Omega)}^2.
	\end{align}
	Now, applying \cref{estimate 1}-\cref{long estimate}, and \cref{ieq F3} to \cref{big sum}, we obtain that 
	\begin{align*}
		&\nu_-\|\delta p_h^{N_0+\frac{1}{2}}\|_{L^2(\Omega)}^2 - \nu_+\|\delta p_h^{\frac{1}{2}} \|_{L^2(\Omega)}^2
		\\
		&\leq - \frac{1}{2}\|\delta\bm u_h^{N_0}\|_{\bm L^2(\Omega)}^2 + \frac{3}{2}\|\delta\bm u_h^1\|_{\bm L^2(\Omega)}^2 + \frac{\nu_-}{2}\|\delta p_h^{N_0 + \frac{1}{2}}\|_{L^2(\Omega)}^2 + \frac{\nu_-}{2}\|\delta p_h^{\frac{1}{2}} \|_{L^2(\Omega)}^2
		\\
		&\quad+ \sum_{l=1}^{N_0-1}(\|F^{l+\frac{1}{2}} - F^{l-\frac{1}{2}}\|_{L^2(\Omega)} + \|F^{l+ \frac{3}{2}} - F^{l+\frac{1}{2}}\|_{L^2(\Omega)})\|\delta p_h^{l+\frac{1}{2}}\|_{L^2(\Omega)} 
		\\
		&\quad+ \frac{2}{\nu_-}\|f\|_{L^1(I,L^2(\Omega))}^2.
	\end{align*}
	Rearranging yields that 
	\begin{align}\label{ieq stability}
		&\frac{\nu_-}{2}\|\delta p_h^{N_0+\frac{1}{2}}\|_{L^2(\Omega)}^2 + \frac{1}{2}\|\delta\bm u_h^{N_0}\|_{\bm L^2(\Omega)}^2
		\\\notag
		&\leq\left(\nu_+ + \frac{\nu_-}{2}\right)\|\delta p_h^{\frac{1}{2}} \|_{L^2(\Omega)}^2 + \frac{3}{2}\|\delta\bm u_h^1\|_{\bm L^2(\Omega)}^2 + \frac{2}{\nu_-}\|f\|_{L^1(I,L^2(\Omega))}^2.
		\\\notag
		&\quad+ \sum_{l=1}^{N_0-1}(\|F^{l+\frac{1}{2}} - F^{l-\frac{1}{2}}\|_{L^2(\Omega)} + \|F^{l+ \frac{3}{2}} - F^{l+\frac{1}{2}}\|_{L^2(\Omega)})\|\delta p_h^{l+\frac{1}{2}}\|_{L^2(\Omega)} 
	\end{align}
	Defining 
	\begin{align*}
		\gamma
		&\coloneqq 2\min\{\nu_-, 1\}^{-1},
		\\
		\alpha
		&\coloneqq \left(\nu_+ + \frac{\nu_-}{2}\right)\|\delta p_h^{\frac{1}{2}} \|_{L^2(\Omega)}^2 + \frac{3}{2}\|\delta\bm u_h^1\|_{\bm L^2(\Omega)}^2 + \frac{2}{\nu_-}\|f\|_{L^1(I,L^2(\Omega))}^2,
		\\
		h_l
		&\coloneqq \|F^{l+\frac{1}{2}} - F^{l-\frac{1}{2}}\|_{L^2(\Omega)} + \|F^{l+ \frac{3}{2}} - F^{l+\frac{1}{2}}\|_{L^2(\Omega)}\quad\forall l\in\{1, \dots, N-2\},
		\\
		I_0
		&\coloneqq \{l\in\{1,\dots,N_0-1\}:\|\delta p_h^{l+\frac{1}{2}}\|_{L^2(\Omega)}\geq 1\},\quad J_0 \coloneqq \{1,\dots, N_0-1\}\setminus I_0,
	\end{align*}
	the inequality \cref{ieq stability} is equivalent to
	\begin{align}\label{ieq delta phN0}
		&\|\delta p_h^{N_0+\frac{1}{2}}\|_{L^2(\Omega)}^2 + \|\delta\bm u_h^{N_0}\|_{\bm L^2(\Omega)}^2
 		\\\notag
		&\leq \gamma\alpha + \gamma\sum_{l\in J_0}h_l\|\delta p_h^{l+\frac{1}{2}}\|_{L^2(\Omega)} + \gamma\sum_{l\in I_0}h_l\|\delta p_h^{l+\frac{1}{2}}\|_{L^2(\Omega)}
		\\\notag
		&\leq \gamma\alpha + \gamma\sum_{l\in J_0}h_l + \gamma\sum_{l\in I_0}h_l(\|\delta p_h^{l+\frac{1}{2}}\|_{L^2(\Omega)}^2 + \|\delta\bm u_h^l\|_{\bm L^2(\Omega)}^2).
	\end{align}
	By definition, it holds that 
	\begin{align}\label{ieq hl}
		\sum_{l=1}^{N_0-1}h_l 
		&= \sum_{l=1}^{N_0-1}(\|F^{l+\frac{1}{2}} - F^{l-\frac{1}{2}}\|_{L^2(\Omega)} + \|F^{l+ \frac{3}{2}} - F^{l+\frac{1}{2}}\|_{L^2(\Omega)})
		\\\notag
		&= \sum_{l=1}^{N_0-1}\left(\left\|\int_{t_l-\tau/2}^{t_l+\tau/2}f(s)\d s\right\|_{L^2(\Omega)} + \left\|\int_{t_{l+1}-\tau/2}^{t_{l+1}+\tau/2}f(s)\d s\right\|_{L^2(\Omega)}\right)
		\\\notag
		&\leq \sum_{l=1}^{N_0-1}\int_{t_l-\tau/2}^{t_l+\tau/2}\|f(s)\|_{L^2(\Omega)}\d s + \sum_{l=1}^{N_0-1}\int_{t_{l+1}-\tau/2}^{t_{l+1}+\tau/2}\|f(s)\|_{L^2(\Omega)}\d s
		\\\notag
		&\leq 2\|f\|_{L^1(I, L^2(\Omega))}.
	\end{align}
	Applying \cref{ieq hl} to \cref{ieq delta phN0}, it follows that 
	\begin{align*}
		&\|\delta p_h^{N_0+\frac{1}{2}}\|_{L^2(\Omega)}^2 + \|\delta\bm u_h^{N_0}\|_{\bm L^2(\Omega)}^2
		\\
		&\leq \gamma(\alpha + 2\|f\|_{L^1(I,L^2(\Omega))}) + \gamma\sum_{l\in I_0}h_l(\|\delta p_h^{l+\frac{1}{2}}\|_{L^2(\Omega)}^2 + \|\delta\bm u_h^l\|_{\bm L^2(\Omega)}^2).
	\end{align*}
	Now, the discrete Gronwall lemma (cf. \cite{clark87}) implies that 
	\begin{align*}
		\|\delta p_h^{N_0+\frac{1}{2}}\|_{L^2(\Omega)}^2+ \|\delta\bm u_h^{N_0}\|_{\bm L^2(\Omega)}^2
		&\leq \gamma(\alpha + 2\|f\|_{L^1(I,L^2(\Omega))})\exp\left(\gamma \sum_{l=1}^{N_0-1}h_l\right)
		\\
		&\hspace{-.18cm}\underbrace{\leq}_{\cref{ieq hl}}\gamma(\alpha + 2\|f\|_{L^1(I,L^2(\Omega))})\exp\left(2\gamma \|f\|_{L^1(I,L^2(\Omega))}\right).
	\end{align*}
	Since this inequality holds for all $N_0\in\{1,\dots, N-1\}$, along with the boundedness of $\{\delta p_h^{\frac{1}{2}} \}_{h>0}\subset L^2(\Omega)$ and \cref{leapfrog scheme}, we conclude that the claim \cref{boundedness claim 1} is valid. Furthermore, by \cref{def deltaphk} along with the triangular inequality, we obtain that
	\begin{equation*}
		\|p_h^{l+1}\|_{L^2(\Omega)}\leq \|p_h^l\|_{L^2(\Omega)} + \tau\|\delta p_h^{l+\frac{1}{2}}\|_{L^2(\Omega)} \leq \|p_h^l\|_{L^2(\Omega)} + \tau C\quad\forall l\in \{0,\dots, N-1\}.
	\end{equation*}
	By induction and since $\tau = \frac{1}{N}$, this leads to
	\begin{equation*}
		\|p_h^{l+1}\|_{L^2(\Omega)}\leq (l+1)\tau C + \|p_h^0\|_{L^2(\Omega)}\leq C + \|p_h^0\|_{L^2(\Omega)}\quad\forall l\in \{0,\dots, N-1\}.
	\end{equation*}
	Analogously, we conclude
	\begin{equation*}
		\|\bm u_h^{l+\frac{1}{2}}\|_{\bm L^2(\Omega)}\leq C + \|\bm u_h^{\frac{1}{2}} \|_{\bm L^2(\Omega)}\quad\forall l\in \{0,\dots, N-1\}.
	\end{equation*}
	Thus, the second claim \cref{boundedness claim 2} is also valid.
\end{proof}

\section{\texorpdfstring{Fully Discrete Scheme for \cref{P}}{}}
\label{section discrete optimal control problem}

Let us begin by introducing the discrete admissible set associated with $\mathcal V_{ad}$ as follows:
\begin{equation*}
	\mathcal V_{ad}^h \coloneqq\{\nu\in \DG_0^h: \nu_- \leq \nu(x)\leq \nu_+ \text{ for every }x\in\Omega\}\subset\mathcal V_{ad}.
\end{equation*}
Making use of the midpoint rule for the numerical integration in time and the leapfrog scheme \cref{leapfrog scheme}, we propose the following fully discrete scheme for \cref{P}:
\begin{equation}\label{P discretized}
	\left\{\begin{aligned}
		&\min\mathcal J_h(\nu_h, \{p_h^l\}_{l=0}^{N}) \coloneqq\frac{\tau}{2} \sum_{i=1}^m\sum_{l=0}^{N-1}\int_{\Omega}\!a_i(t_{l+\frac{1}{2}})(p_h^{l+\frac{1}{2}}-p^{ob}_{i}(t_{l+\frac{1}{2}}))^2\d x\!+\!\frac{\lambda}{2}\|\nu_h\|_{L^2(\Omega)}^2\\
		&\text{ s.t. $\nu_h\in\mathcal V_{ad}^h$ and $(\{p_h^l\}_{l=0}^N,\{\bm u_h^{l+\frac{1}{2}}\}_{l=0}^{N-1})\in(\P_{1,D}^h)^{N+1}\times(\DGbm_0^h)^N$ solves \cref{leapfrog scheme}}.
	\end{aligned}\right.
\end{equation}
As for every $\nu_h\in\mathcal V_{ad}^h$, \cref{leapfrog scheme} admits a unique solution $(\{p_h^l\}_{l=0}^N,\{\bm u_h^{l+\frac{1}{2}}\}_{l=0}^{N-1})\in(\P_{1,D}^h)^{N+1}\times(\DGbm_0^h)^N$, the associated solution mapping 
\begin{equation*}
	S_h\colon \mathcal V_{ad}^h\to L^2(\Omega)^{N+1}\times\bm L^2(\Omega)^N, \nu_h\mapsto(\{p_h^l\}_{l=0}^N,\{\bm u_h^{l+\frac{1}{2}}\}_{l=0}^{N-1})	
\end{equation*}
is well-defined. Denoting the first component of $S_h$ with $S_{h,p}\colon\nu_h\mapsto\{p_h^l\}_{l=0}^N$, the reduced formulation of \cref{P discretized} reads as follows:
\begin{equation}\label{P discretized reduced}\tag{$\P_h$}
	\min_{\nu\in\mathcal V_{ad}^h} J_h(\nu_h)\coloneqq \mathcal J_h(\nu_h, S_{h,p}(\nu_h)).
\end{equation}

\begin{theorem}\label{theorem existence}
	Let \cref{assumption} hold. Then, for every $h>0$, the minimization problem \cref{P discretized reduced} admits at least one global minimizer $\nu_h\in \mathcal V_{ad}^h$. 
\end{theorem}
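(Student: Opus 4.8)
The minimization problem \cref{P discretized reduced} is posed over the finite-dimensional set $\mathcal V_{ad}^h$, so the natural strategy is the direct method of the calculus of variations in finite dimensions: I would first observe that $\mathcal V_{ad}^h$ is a nonempty, closed, and bounded subset of the finite-dimensional space $\DG_0^h$ (nonempty because the constant function $\nu_-$ lies in it, and closed and bounded because it is cut out by finitely many pointwise inequalities on the coefficients), hence compact. It therefore suffices to show that the reduced objective $J_h\colon\mathcal V_{ad}^h\to\mathbb R$ is continuous, and the existence of a global minimizer then follows from the Weierstrass extreme value theorem.

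The main work is thus the continuity of $J_h$, and since $J_h(\nu_h) = \mathcal J_h(\nu_h, S_{h,p}(\nu_h))$ with $\mathcal J_h$ depending continuously on its arguments (it is a finite sum of quadratic terms in the $p_h^l$ together with the quadratic Tikhonov term $\frac{\lambda}{2}\|\nu_h\|_{L^2(\Omega)}^2$), it reduces to showing that the discrete solution map $S_{h,p}$ is continuous on $\mathcal V_{ad}^h$ for fixed $h>0$. To see this I would unwind the iteration in \cref{leapfrog scheme}. The dependence on $\nu_h$ enters only through the coefficient $\nu_h$ in the finite-dimensional variational problem for $p_h^{l+1}$ and through the initial datum $\bm u_h^{1/2} = \Phi_h(\nu_h)$; the second line of \cref{leapfrog scheme} then propagates $\bm u_h^{l+3/2} = \bm u_h^{l+1/2} - \tau\nabla p_h^{l+1}$, which is plainly continuous in the data. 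The map $\Phi_h$ defined by \cref{definition Phi h} is linear (indeed affine) in $\nu_h$ via the right-hand side $(\eta p_0 + \nu_h p_1,\cdot)_{L^2(\Omega)}$, and by Lax--Milgram on the fixed finite-dimensional space $\P_{1,D}^h$ it depends continuously on $\nu_h$. For the variational problem in the first line of \cref{leapfrog scheme}, rewriting $\delta p_h^{l+1/2}$ and $p_h^{l+1/2}$ via \cref{def deltaphk} exhibits $p_h^{l+1}$ as the solution of $a_{\nu_h}(p_h^{l+1},\phi_h) = \ell(\phi_h)$ for all $\phi_h\in\P_{1,D}^h$, where the bilinear form $a_{\nu_h}(v,w) = \int_\Omega(\frac{1}{\tau}\nu_h + \frac{1}{2}\eta)vw\,\d x$ has coefficients depending continuously (linearly) on $\nu_h$ and is uniformly coercive thanks to $\nu_h\ge\nu_->0$, while the load $\ell$ depends continuously on $p_h^l$, $\bm u_h^{l+1/2}$, and $\nu_h$. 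A standard Lax--Milgram stability argument then gives that $p_h^{l+1}$ depends continuously on $(\nu_h, p_h^l, \bm u_h^{l+1/2})$; since $p_h^0 = \Psi_h p_0$ is independent of $\nu_h$ and $\bm u_h^{1/2} = \Phi_h(\nu_h)$ is continuous in $\nu_h$, an induction over $l = 0,\dots,N-1$ (a finite recursion for fixed $N$) yields that each $p_h^l$, and hence $S_{h,p}$, is continuous on $\mathcal V_{ad}^h$.

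I do not expect a genuine obstacle here; the only point that needs slight care is making the induction clean, i.e. formulating the continuous dependence of the one-step Lax--Milgram solve on \emph{both} the coefficient $\nu_h$ (entering the bilinear form) and the previous iterates (entering the right-hand side), and noting that all the finite-dimensional norms on $\P_{1,D}^h$ and $\DGbm_0^h$ are equivalent so that the notion of continuity is unambiguous. Once continuity of $J_h$ is in hand, compactness of $\mathcal V_{ad}^h$ finishes the argument immediately.
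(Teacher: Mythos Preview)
Your proposal is correct and follows essentially the same strategy as the paper: establish continuity of the discrete solution map $S_{h,p}$ by induction over the time levels, deduce continuity of $J_h$, and conclude via compactness of $\mathcal V_{ad}^h$ and the Weierstrass theorem. The only cosmetic difference is that the paper carries out the induction by deriving explicit Lipschitz-type bounds $\|\tilde p_h^l - p_h^l\|_{L^2(\Omega)} \le c\|\tilde\nu_h - \nu_h\|_{L^2(\Omega)}$ (using the inverse estimate of \cref{lemma inverse estimate} to pass between $\|\cdot\|_{L^2}$ and $\|\nabla\cdot\|_{\bm L^2}$), whereas you invoke continuous dependence in the one-step Lax--Milgram solve more abstractly; for fixed $h$ both arguments are equivalent.
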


\begin{proof}
	Let $h>0$ be given. We show that the solution operator $S_{h,p}\colon L^2(\Omega)\supset\mathcal V_{ad}^h\to L^2(\Omega)^{N+1},\nu_h\mapsto \{p_h^l\}_{l=0}^N$ is continuous. For this purpose, let $\nu_h,\tilde\nu_h\in\mathcal V_{ad}^h$ and $(\{p_h^l\}_{l=0}^N,\{\bm u_h^{l+\frac{1}{2}}\}_{l=0}^{N-1})$, $(\{\tilde p_h^l\}_{l=0}^N,\{\tilde{\bm u}_h^{l+\frac{1}{2}}\}_{l=0}^{N-1})\in(\P_{1,D}^h)^{N+1}\times(\DGbm_0^h)^N$ denote the solutions to \cref{leapfrog scheme} associated with $\nu_h$ and $\tilde\nu_h$, respectively. Then, we have that 
	\begin{equation}\label{leapfrog scheme difference}
		\left\{\begin{aligned}
			&\int_\Omega(\tilde\nu_h(\delta\tilde p_h^{l+\frac{1}{2}} - \delta p_h^{l+\frac{1}{2}}) + \eta(\tilde p_h^{l+\frac{1}{2}} - p_h^{l+\frac{1}{2}}))\phi_h - (\tilde{\bm u}_h^{l + \frac{1}{2}} - \bm u_h^{l + \frac{1}{2}})\cdot\nabla\phi_h\d x 
			\\
			&\begin{aligned}
				&= \int_\Omega (\tilde\nu_h - \nu_h)\delta p_h^{l+\frac{1}{2}}\phi_h\d x
				&&\forall \phi_h\in\P_{1,D}^h, l=0,\dots, N-1
				\\
				&\delta\tilde{\bm u}_h^l - \delta \bm u_h^l + \nabla(\tilde p_h^l - p_h^l) = 0
				&&\forall l=1,\dots, N-1
				\\
				&\tilde p_h^0 - p_h^0 = 0,\quad \tilde{\bm u}_h^{\frac{1}{2}} - \bm u_h^{\frac{1}{2}} = \Phi_h(\tilde\nu_h) - \Phi_h(\nu_h).
			\end{aligned}
		\end{aligned}\right.
	\end{equation}
	We show by induction that for every $l=1,\dots, N$, there exists a constant $c>0$ that is independent of $\tilde\nu_h$ such that
	\begin{equation}\label{induction continuity}
		\left\{\begin{aligned}
			\|\tilde p_h^l - p_h^l\|_{L^2(\Omega)}
			&\leq c\|\tilde\nu_h - \nu_h\|_{L^2(\Omega)}
			\\
			\|\tilde{\bm u}_h^{l-\frac{1}{2}} - \bm u_h^{l-\frac{1}{2}}\|_{\bm L^2(\Omega)}
			&\leq c\|\tilde\nu_h - \nu_h\|_{L^2(\Omega)}.
		\end{aligned}\right.
	\end{equation}
	From \cref{leapfrog scheme difference}, we obtain for all $l=0,\dots, N-1$ that for every $\phi_h\in\P_{1,D}^h$ that
	\begin{align}\label{first equation continuity}
		&\int_\Omega\left(\frac{\tilde\nu_h}{\tau} + \frac{\eta}{2}\right)(\tilde p_h^{l+1} - p_h^{l+1})\phi_h\d x
		\\\notag
		&= \int_\Omega\left(\frac{\tilde\nu_h}{\tau} - \frac{\eta}{2}\right)(\tilde p_h^l - p_h^l)\phi_h + (\tilde{\bm u}_h^{l+\frac{1}{2}} - \bm u_h^{l+\frac{1}{2}})\cdot\nabla\phi_h + (\tilde\nu_h - \nu_h)\delta p_h^{l+\frac{1}{2}}\phi_h\d x.
	\end{align}
	The equation \cref{first equation continuity} with $l=0$ implies that
	\begin{align}\label{first equation continuity 1}
		\int_\Omega\left(\frac{\tilde\nu_h}{\tau} + \frac{\eta}{2}\right)(\tilde p_h^1 - p_h^1)\phi_h\d x
		&\underbrace{=}_{\cref{leapfrog scheme difference}} \int_\Omega(\Phi_h(\tilde\nu_h) - \Phi_h(\nu_h))\cdot\nabla\phi_h + (\tilde\nu_h - \nu_h)\delta p_h^{\frac{1}{2}}\phi_h\d x
		\\\notag
		&\hspace{-.04cm}\underbrace{=}_{\cref{definition Phi h}} \int_\Omega (\tilde\nu_h - \nu_h)(p_1 + \delta p_h^{\frac{1}{2}})\phi_h\d x\quad\forall\phi_h\in\P_{1,D}^h.
	\end{align}
	Thus, testing \cref{first equation continuity 1} with $\phi_h = (\tilde p_h^1 - p_h^1)$ and using H\"older's inequality, we obtain that 
	\begin{equation*}
		\|\tilde p_h^1 - p_h^1\|_{L^2(\Omega)}\leq \nu_{\min^{-1}}\tau(\|p_1\|_{L^\infty(\Omega)} + \|\delta p_h^{\frac{1}{2}}\|_{C(\overline\Omega)})\|\tilde\nu_h - \nu_h\|_{L^2(\Omega)}.
	\end{equation*}
	Furthermore, we have that 
	\begin{equation*}
		\|\tilde{\bm u}_h^{\frac{1}{2}} - \bm u_h^{\frac{1}{2}}\|_{\bm L^2(\Omega)} \underbrace{=}_{\cref{leapfrog scheme difference}}\|\Phi_h(\tilde\nu_h) - \Phi(\nu_h)\|_{\bm L^2(\Omega)}\leq c_P\|p_1\|_{L^\infty(\Omega)}\|\tilde\nu_h - \nu_h\|_{L^2(\Omega)},
	\end{equation*}
	where the last inequality is obtained by inserting $\phi_h = y_h$ in \cref{definition Phi h} and $c_P>0$ denotes a Poincaré constant. Therefore, \cref{induction continuity} is valid for $l=1$. Now, assume that \cref{induction continuity} holds for a fixed $l\in\{1,\dots,N-1\}$. Since
	\begin{equation}\label{convergence u}
		\|\tilde{\bm u}_{h}^{l+\frac{1}{2}} - \bm u_{h}^{l+\frac{1}{2}}\|_{\bm L^2(\Omega)}\underbrace{\leq}_{\cref{leapfrog scheme difference}}\|\tilde{\bm u}_{h}^{l-\frac{1}{2}} - \bm u_{h}^{l-\frac{1}{2}}\|_{\bm L^2(\Omega)} + \tau\|\nabla(\tilde p_h^l - p_h^l)\|_{\bm L^2(\Omega)},
	\end{equation}
	applying the induction assumption and the inverse estimate from \cref{lemma inverse estimate} to \cref{convergence u}, we obtain the second inequality in \cref{induction continuity} for $l+1$. Further, testing \cref{first equation continuity} with $\phi_h = \tilde p_h^{l+1} - p_h^{l+1}$ and again using the inverse estimate in \cref{lemma inverse estimate} yields that
	\begin{align*}
		&\|\tilde p_h^{l+1} - p_h^{l+1}\|_{L^2(\Omega)}
		\\
		&\leq \nu_-^{-1}\tau\bigg(\left(\frac{\nu_+}{\tau} + \frac{1}{2}\|\eta\|_{L^\infty(\Omega)}\right)\|\tilde p_h^l - p_h^l\|_{L^2(\Omega)}+\frac{c_{inv}}{h}\|\tilde{\bm u}_h^{l+\frac{1}{2}} - \bm u_h^{l+\frac{1}{2}}\|_{\bm L^2(\Omega)}
		\\
		&\quad+\|\delta p_h^{l+\frac{1}{2}}\|_{C(\overline\Omega)}\|\tilde\nu_h - \nu_h\|_{L^2(\Omega)}\bigg).
	\end{align*}
	Applying the induction assumption and \cref{convergence u}, we obtain the first inequality in \cref{induction continuity} for $l+1$. This completes the induction proof. Consequently, by \cref{induction continuity} and since $\tilde p_h^0 = p_h^0$, the mapping $S_{h,p}\colon L^2(\Omega)\supset\mathcal V_{ad}^h\to L^2(\Omega)^{N+1},\nu\mapsto\{p_h^l\}_{l=0}^N$ is continuous and thus $J_h\colon L^2(\Omega)\supset\mathcal V_{ad}^h\to\mathbb R$ is continuous. On the other hand, $\mathcal V_{ad}^h\subset\DG_0^h$ is compact since it is a closed and bounded subset of the finite-dimensional space $\DG_0^h$. Thus, the minimization problem \cref{P discretized reduced} admits at least one global minimizer due to the Weierstrass theorem. 
\end{proof}

\section{Convergence Analysis}
\label{section convergence}

Given $h>0$, $N\in\mathbb N$, and $(\{p_h^l\}_{l=0}^N, \{\bm u_h^{l+\frac{1}{2}}\}_{l=0}^{N-1})\in(\P_{1,D}^h)^{N+1}\times(\DGbm_0^h)^N$, let us define the interpolations
\begin{align*}
	\Lambda^{p}_{N,h}, \Pi^{p}_{N,h}, \Theta^{p}_{N,h}&\colon I\to \P_{1,D}^h
	&&\Lambda^{\bm u}_{N,h}, \Pi^{\bm u}_{N,h}\colon I\to \DGbm_0^h
	\\
	F_{N,h}, p_{i,N,h}^{ob}&\colon I\to L^2(\Omega)
	&&\hspace{.9cm}a_{i,N,h}\colon I\to L^\infty(\Omega),
\end{align*}
by 
\begin{align}\label{def lambda p}
	\Lambda^{p}_{N,h}(t)
	&\coloneqq \left\{\begin{aligned}
		&p_h^0
		&&\text{if } t = 0
		\\
		&p_h^l + (t - t_l)\delta p_h^{l+\frac{1}{2}}\hspace{.15cm}
		&&\text{if } t\in(t_l,t_{l+1}]\text{ for some }l\in \{0,\dots, N-1\}
	\end{aligned}\right.\hspace{-.21cm}
	\\\label{def pi p}
	\Pi^{p}_{N,h}(t) 
	&\coloneqq \left\{\begin{aligned}
		&p_h^0
		&&\text{if } t = 0
		\\
		&p_h^{l+\frac{1}{2}}\hspace{2.17cm}
		&&\text{if } t\in(t_l,t_{l+1}]\text{ for some }l\in \{0,\dots, N-1\}
	\end{aligned}\right.\hspace{-.21cm}
	\\\label{def theta p}
	\Theta^{p}_{N,h}(t) 
	&\coloneqq \left\{\begin{aligned}
		&p_h^0
		&&\text{if } t = 0
		\\
		&p_h^l\hspace{2.52cm}
		&&\text{if } t\in(t_l,t_{l+1}]\text{ for some }l\in \{0,\dots, N-1\}
	\end{aligned}\right.\hspace{-.21cm}
	\\\label{def lambda u}
	\Lambda^{\bm u}_{N,h}(t)
	&\coloneqq \left\{\begin{aligned}
		&\bm u_h^{\frac{1}{2}} 
		&&\text{if } t = 0
		\\
		&\bm u_h^{l-\frac{1}{2}} + (t - t_l)\delta\bm u_h^l
		&&\text{if } t\in(t_l,t_{l+1}]\text{ for some }l\in \{0,\dots, N-1\}
	\end{aligned}\right.\hspace{-.21cm}
	\\\label{def pi u}
	\Pi^{\bm u}_{N,h}(t)
	&\coloneqq \left\{\begin{aligned}
		&\bm u_h^{\frac{1}{2}} 
		&&\text{if } t = 0
		\\
		&\bm u_h^{l+\frac{1}{2}}\hspace{2.09cm}
		&&\text{if } t\in(t_l,t_{l+1}]\text{ for some }l\in \{0,\dots, N-1\}
	\end{aligned}\right.\hspace{-.21cm}
	\\\label{def F interpolate}
	F_{N,h}(t)
	&\coloneqq \left\{\begin{aligned}
		&F(0)
		&&\text{if } t = 0
		\\
		& F(t_{l+\frac{1}{2}})\hspace{1.68cm}
		&&\text{if } t\in(t_l,t_{l+1}]\text{ for some }l\in \{0,\dots, N-1\}
	\end{aligned}\right.\hspace{-.21cm}
	\\\label{def pob interpolate}
		p_{i,N,h}^{ob}(t)
	&\coloneqq \left\{\begin{aligned}
		&p_{i}^{ob}(0)
		&&\text{if } t = 0
		\\
		&p_{i}^{ob}(t_{l+\frac{1}{2}})\hspace{1.5cm}
		&&\text{if } t\in(t_l,t_{l+1}]\text{ for some }l\in \{0,\dots, N-1\}
	\end{aligned}\right.\hspace{-.21cm}
	\\\label{def ai interpolate}
		a_{i,N,h}(t)
	&\coloneqq \left\{\begin{aligned}
		&a_{i}(0)
		&&\text{if } t = 0
		\\
		&a_{i}(t_{l+\frac{1}{2}})\hspace{1.66cm}
		&&\text{if } t\in(t_l,t_{l+1}]\text{ for some }l\in \{0,\dots, N-1\}
	\end{aligned}\right.\hspace{-.21cm}
\end{align}
for all $i=1,\dots,m$. 
Suppose that $(\{ p_h^l\}_{l=0}^N, \{\bm u_h^{l+\frac{1}{2}}\}_{l=0}^{N-1})$ solves the leapfrog scheme \cref{leapfrog scheme} associated with $\nu_h$. Then, the corresponding interpolations satisfy 
\begin{equation}\label{interpolation scheme state1}
	\left\{\begin{aligned}
		&\int_\Omega(\nu_h\partial_t\Lambda^{p}_{N,h}(t) + \eta\Pi^{p}_{N,h}(t))\phi_h - \Pi^{\bm u}_{N,h}(t)\cdot\nabla\phi_h\d x = \int_\Omega F_{N,h}\phi_h\d x
		\\
		&\begin{aligned}
			&\phantom{\hspace{5cm}}
			&&\text{for all $\phi_h\in\P_{1,D}^h$ and a.e. $t\in I$}
			\\
			&\partial_t\Lambda^{\bm u}_{N,h}(t) + \nabla \Theta^{p}_{N,h}(t) = 0
			&&\text{for a.e. } t\in I
			\\
			&(\Lambda^{p}_{N,h}, \Lambda^{\bm u}_{N,h})(0) = (\Psi_h(p_0), \Phi_h(\nu_h)).
		\end{aligned}
	\end{aligned}\right.
\end{equation}

\begin{lemma}\label{lemma Phi convergence}
	Let \cref{assumption} hold. Furthermore, let ${\{v_h\}_{h> 0} \subset \mathcal V_{ad}}$ converge weakly in $L^2(\Omega)$ towards some $\nu\in \mathcal V_{ad}$ for $h\to 0$. Then, it holds that 
	\begin{equation*}
		\Phi_h(\nu_h) \rightharpoonup \Phi(\nu)\quad\text{weakly in }\bm L^2(\Omega)\text{ as }h\to 0. 
	\end{equation*}
\end{lemma}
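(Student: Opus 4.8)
The plan is to combine a uniform a priori bound for $\{\Phi_h(\nu_h)\}_{h>0}$ with a subsequence argument, passing to the limit in the discrete variational problem \eqref{definition Phi h} tested against interpolated smooth functions. Throughout, write $y_h\in\P_{1,D}^h$ for the solution of \eqref{definition Phi h} associated with $\nu_h$, so that $\Phi_h(\nu_h)=\nabla y_h$, and $y\in H^1_D(\Omega)$ for the solution of \eqref{definition Phi} associated with $\nu$, so that $\Phi(\nu)=\nabla y$.

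\emph{Step 1 (uniform bound).} Testing \eqref{definition Phi h} with $\phi_h=y_h\in\P_{1,D}^h\subset H^1_D(\Omega)$ and using the Poincaré inequality on $H^1_D(\Omega)$ together with $\nu_h\in\mathcal V_{ad}$ (so $0<\nu_h\le\nu_+$ a.e.) yields $\|\nabla y_h\|_{\bm L^2(\Omega)}\le c_P\|\eta p_0+\nu_h p_1\|_{L^2(\Omega)}\le c_P(\|\eta\|_{L^\infty(\Omega)}\|p_0\|_{L^2(\Omega)}+\nu_+\|p_1\|_{L^2(\Omega)})$, a bound independent of $h$; the Poincaré inequality then also bounds $\{y_h\}_{h>0}$ in $H^1_D(\Omega)$.

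\emph{Step 2 (passage to the limit).} By the subsequence principle for weak convergence, it suffices to show that every subsequence of $\{\nabla y_h\}_{h>0}$ has a further subsequence converging weakly in $\bm L^2(\Omega)$ to the \emph{single} limit $\Phi(\nu)$. Given such a subsequence, Step 1 and reflexivity of the closed subspace $H^1_D(\Omega)$ produce a further (not relabeled) subsequence with $y_h\rightharpoonup\tilde y$ weakly in $H^1_D(\Omega)$, hence $\nabla y_h\rightharpoonup\nabla\tilde y$ weakly in $\bm L^2(\Omega)$. Fix $\phi\in C^\infty_D(\Omega)$; then $\mathcal I_h\phi\in\P_{1,D}^h$ is an admissible test function in \eqref{definition Phi h}, and since $\phi\in H^2(\Omega)$, estimate \eqref{pih convergence2} gives $\mathcal I_h\phi\to\phi$ strongly in $H^1(\Omega)$. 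Passing to the limit in
\[
	\int_\Omega\nabla y_h\cdot\nabla(\mathcal I_h\phi)\,\d x=\int_\Omega(\eta p_0+\nu_h p_1)\,\mathcal I_h\phi\,\d x,
\]
the left-hand side tends to $\int_\Omega\nabla\tilde y\cdot\nabla\phi\,\d x$ by weak--strong convergence (weak convergence of $\nabla y_h$ paired with strong convergence of $\nabla(\mathcal I_h\phi)$ in $\bm L^2(\Omega)$), while on the right-hand side $\nu_h p_1\rightharpoonup\nu p_1$ weakly in $L^2(\Omega)$ --- because $p_1\psi\in L^2(\Omega)$ for every $\psi\in L^2(\Omega)$, so $\int_\Omega\nu_h p_1\psi\,\d x=\int_\Omega\nu_h(p_1\psi)\,\d x\to\int_\Omega\nu(p_1\psi)\,\d x$ --- and $\mathcal I_h\phi\to\phi$ strongly in $L^2(\Omega)$, so the right-hand side tends to $\int_\Omega(\eta p_0+\nu p_1)\phi\,\d x$. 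Hence $\tilde y$ satisfies the variational identity in \eqref{definition Phi} for all $\phi\in C^\infty_D(\Omega)$, and by density of $C^\infty_D(\Omega)$ in $H^1_D(\Omega)$ for all $\phi\in H^1_D(\Omega)$; uniqueness of the solution of \eqref{definition Phi} then forces $\nabla\tilde y=\Phi(\nu)$. Since the limit $\Phi(\nu)$ does not depend on the chosen subsequence, the whole sequence satisfies $\Phi_h(\nu_h)=\nabla y_h\rightharpoonup\Phi(\nu)$ weakly in $\bm L^2(\Omega)$.

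\emph{Expected main obstacle.} The delicate point is the limit passage in $\int_\Omega\nabla y_h\cdot\nabla(\mathcal I_h\phi)\,\d x$: this is a duality pairing of two $h$-dependent factors, so one genuinely needs \emph{strong} $\bm L^2$-convergence of $\nabla(\mathcal I_h\phi)$ to combine it with the merely weak convergence of $\nabla y_h$. This is precisely why the test functions are taken in $C^\infty_D(\Omega)$ --- so that \eqref{pih convergence2} applies and, at the same time, $\mathcal I_h\phi$ still respects the Dirichlet condition, $\mathcal I_h\phi\in\P_{1,D}^h$ --- and why the density relation $H^1_D(\Omega)=\overline{C^\infty_D(\Omega)}^{\|\cdot\|_{H^1(\Omega)}}$ is needed at the end to upgrade the identity to all admissible test functions.
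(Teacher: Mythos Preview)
Your proof is correct and follows essentially the same approach as the paper: a uniform $H^1_D(\Omega)$-bound on $\{y_h\}$ via testing \eqref{definition Phi h} with $y_h$ and Poincar\'e, extraction of a weakly convergent subsequence, and limit passage in \eqref{definition Phi h} tested against $\mathcal I_h\phi$ for $\phi\in C^\infty_D(\Omega)$, followed by the density of $C^\infty_D(\Omega)$ in $H^1_D(\Omega)$. You are in fact slightly more explicit than the paper in invoking the subsequence principle to conclude convergence of the full family and in justifying the right-hand side limit via $p_1\in L^\infty(\Omega)$.
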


\begin{proof}
	For every $h>0$, we set $\Phi_h(\nu_h) = \nabla y_h$ where $y_h\in\P_{1,D}^h$ is according to \cref{definition Phi h} the unique solution to
	\begin{equation}\label{Phi definition new}
		\int_\Omega\nabla y_h\cdot\nabla\phi_h\d x = \int_\Omega(\eta p_0 + \nu_h p_1)\phi_h\d x\quad\forall\phi_h\in\P_{1,D}^h.
	\end{equation}
	Testing \cref{Phi definition new} with $\phi_h = y_h$, along with the Poincaré inequality, we obtain the boundedness of $\{y_h\}_{h> 0}\subset H^1_D(\Omega)$. Therefore, there exists a subsequence (still denoted by the same symbol) and a $y\in H^1_D(\Omega)$ such that 
	\begin{equation}\label{convergence y}
		y_h\rightharpoonup y\quad\text{weakly in }H^1(\Omega)\quad\text{as }h\to 0.
	\end{equation}
	Now, let $\phi\in C^\infty_D(\Omega)$ be arbitrarily fixed. Then, testing \cref{Phi definition new} with $\phi_h = \mathcal I_h\phi\in\P_{1,D}^h$ (see \cref{definition standard interpolation}) and passing to the limit $h\to 0$, we obtain that 
	\begin{align*}
		\int_\Omega\nabla y\cdot\nabla\phi\d x 
		&= \lim_{h\to 0}\int_\Omega\nabla y_h\cdot\nabla\mathcal I_h\phi\d x = \lim_{h\to 0}\int_\Omega(\eta p_0 + \nu_h p_1)\mathcal I_h\phi\d x 
		\\
		&= \int_\Omega(\eta p_0 + \nu p_1)\phi\d x,
	\end{align*}
	where we have used \cref{pih convergence1}, \cref{pih convergence2}, \cref{convergence y}, and the weak convergence $\nu_h\rightharpoonup \nu$ in $L^2(\Omega)$ as $h\to 0$. 
	In conclusion 
	\begin{equation*}
		\int_\Omega\nabla y\cdot\nabla\phi\d x = \int_\Omega(\eta p_0 + \nu p_1)\phi\d x \quad\forall \phi\in C^\infty_D(\Omega).
	\end{equation*}
	Since $H^1_D(\Omega) = \overline{C^\infty_D(\Omega)}^{\|\cdot\|_{H^1(\Omega)}}$, it follows that
	\begin{equation*}
		\int_\Omega\nabla y\cdot\nabla\phi\d x = \int_\Omega(\eta p_0 + \nu p_1)\phi\d x \quad\forall \phi\in H^1_D(\Omega)\quad\underbrace{\Rightarrow}_{\cref{definition Phi}}\quad\nabla y = \Phi(\nu).
	\end{equation*}
\end{proof}

\begin{theorem}\label{theorem interpolations}
	Let \cref{assumption} hold and for every $h>0$, let $\frac{1}{\tau} = N = N(h)\in\mathbb N$ satisfy the {CFL}-condition \cref{cfl}. Furthermore, let ${\{v_h\}_{h> 0} \subset \mathcal V_{ad}}$ converge weakly in $L^2(\Omega)$ towards some $\nu\in \mathcal V_{ad}$ for $h\to 0$. Then, the interpolations of the solution $(\{p_h^l\}_{l=0}^N, \{\bm u_h^{l+\frac{1}{2}}\}_{l=0}^{N-1})\in(\P_{1,D}^h)^{N+1}\times(\DGbm_0^h)^N$ to \cref{leapfrog scheme} satisfy 
	\begin{align*}
		\Lambda^{p}_{N,h}
		&\to p\quad\text{in } C(I, L^2(\Omega))
		&&\text{ as }h\to 0
		\\
		\Theta^{p}_{N,h},\Pi^{p}_{N,h}
		&\to p\quad\text{in } L^\infty(I, L^2(\Omega))
		&&\text{ as }h\to 0,
	\end{align*}
	where $p\in C^1(I, L^2(\Omega))\cap C(I, H^1_D(\Omega))$ is the first component of the unique solution to \cref{system state} associated with $\nu$.
\end{theorem}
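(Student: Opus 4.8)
The plan is a compactness-and-limit-passage argument applied to the interpolated scheme \cref{interpolation scheme state1}; the two genuine difficulties are the $h$-dependence of the discrete spaces $\P_{1,D}^h$ and the fact that the leading term $\nu_h\partial_t\Lambda^p_{N,h}$ is a product of two sequences that converge only weakly.

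First I would convert the a priori bounds of \cref{theorem stability} into bounds on the interpolations. Since $\partial_t\Lambda^p_{N,h}$ equals $\delta p_h^{l+\frac12}$ on $(t_l,t_{l+1})$, it is bounded in $L^\infty(I,L^2(\Omega))$; since $\Lambda^p_{N,h}(t)$ is a convex combination of $p_h^l$ and $p_h^{l+1}$ and $\|\nabla p_h^l\|_{\bm L^2(\Omega)}$ is bounded uniformly in $l$ and $h$ (by \cref{theorem stability} for $1\le l\le N-1$, by \cref{convergence psi} for $l=0$, and by \cref{theorem stability} together with the CFL condition \cref{cfl} for the remaining index $l=N$), the family $\{\Lambda^p_{N,h}\}$ is bounded in $L^\infty(I,H^1_D(\Omega))\cap W^{1,\infty}(I,L^2(\Omega))$. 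The Aubin--Lions--Simon lemma, using the compact embedding $H^1_D(\Omega)\hookrightarrow L^2(\Omega)$, then yields a subsequence with $\Lambda^p_{N,h}\to p$ strongly in $C(I,L^2(\Omega))$. From the second line of \cref{interpolation scheme state1}, $\partial_t\Lambda^{\bm u}_{N,h}=-\nabla\Theta^p_{N,h}$ is bounded in $L^\infty(I,\bm L^2(\Omega))$, so $\{\Lambda^{\bm u}_{N,h}\}$ is bounded in $W^{1,\infty}(I,\bm L^2(\Omega))$, and, extracting further, $\Lambda^{\bm u}_{N,h}\rightharpoonup^\ast\bm u$ there and $\nabla\Theta^p_{N,h}\rightharpoonup^\ast\nabla p$ in $L^\infty(I,\bm L^2(\Omega))$. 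Finally, comparing \cref{def pi p}, \cref{def theta p}, \cref{def pi u} and using $\max_l\|\delta p_h^{l+\frac12}\|_{L^2(\Omega)}+\max_l\|\delta\bm u_h^l\|_{\bm L^2(\Omega)}\le C$ gives $\|\Theta^p_{N,h}-\Lambda^p_{N,h}\|_{L^\infty(I,L^2(\Omega))}+\|\Pi^p_{N,h}-\Lambda^p_{N,h}\|_{L^\infty(I,L^2(\Omega))}+\|\Pi^{\bm u}_{N,h}-\Lambda^{\bm u}_{N,h}\|_{L^\infty(I,\bm L^2(\Omega))}\le C\tau$, so $\Theta^p_{N,h},\Pi^p_{N,h}\to p$ in $L^\infty(I,L^2(\Omega))$ and $\Pi^{\bm u}_{N,h}\rightharpoonup^\ast\bm u$ in $L^\infty(I,\bm L^2(\Omega))$.

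Next I would pass to the limit in \cref{interpolation scheme state1}. Fix $\phi\in C^\infty_D(\Omega)$ and $\psi\in C^\infty_c((0,T))$, test the first line with $\phi_h=\mathcal I_h\phi\in\P_{1,D}^h$, multiply by $\psi(t)$ and integrate over $I$. The only delicate term, $\int_I\psi\int_\Omega\nu_h\partial_t\Lambda^p_{N,h}\,\mathcal I_h\phi\,\d x\,\d t$, I would rewrite by integrating by parts in time (no boundary contributions since $\supp\psi\subset(0,T)$) as $-\int_I\psi'\int_\Omega\nu_h\Lambda^p_{N,h}\,\mathcal I_h\phi\,\d x\,\d t$; here $\Lambda^p_{N,h}\mathcal I_h\phi\to p\phi$ strongly in $C(I,L^2(\Omega))$ (using \cref{pih convergence1}) while $\nu_h\rightharpoonup\nu$ in $L^2(\Omega)$, so by dominated convergence this tends to $-\int_I\psi'\int_\Omega\nu p\,\phi\,\d x\,\d t=\int_I\psi\int_\Omega\nu\,\partial_t p\,\phi\,\d x\,\d t$, the last step being legitimate because $p\in W^{1,\infty}(I,L^2(\Omega))$ and $\nu\in L^\infty(\Omega)$. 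The remaining terms converge directly via $\Pi^p_{N,h}\to p$ in $L^\infty(I,L^2(\Omega))$, $\Pi^{\bm u}_{N,h}\rightharpoonup^\ast\bm u$, $\mathcal I_h\phi\to\phi$ in $L^\infty(\Omega)$ and in $H^1(\Omega)$ (\cref{pih convergence1}, \cref{pih convergence2}), and $F_{N,h}\to F$ in $C(I,L^2(\Omega))$ (true because $F\in W^{1,1}(I,L^2(\Omega))$ is uniformly continuous on $I$ and $|t-t_{l+\frac12}|\le\tau/2$). As $\psi$ is arbitrary, $(p,\bm u)$ satisfies, for a.e.\ $t\in I$, the identity $\int_\Omega(\nu\partial_t p+\eta p)\phi-\bm u\cdot\nabla\phi\,\d x=\int_\Omega F\phi\,\d x$ for all $\phi\in C^\infty_D(\Omega)$, hence — by density of $C^\infty_D(\Omega)$ in $H^1_D(\Omega)$ and continuity of the left-hand side in $\phi$ — for all $\phi\in H^1_D(\Omega)$; in particular $\bm u(t)\in\bm H_N(\Div,\Omega)$ a.e. Testing the second line against $\psi(t)\bm w(x)$ gives $\partial_t\bm u+\nabla p=\bm 0$ in $L^\infty(I,\bm L^2(\Omega))$. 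For the initial data, $\Lambda^p_{N,h}(0)=\Psi_h p_0\to p_0$ in $L^2(\Omega)$ by \cref{convergence psi} and $\Lambda^p_{N,h}\to p$ in $C(I,L^2(\Omega))$ force $p(0)=p_0$, while choosing in the second test a $\psi$ with $\psi(0)\ne0$, $\psi(T)=0$ and integrating by parts in time, the convergence $\Lambda^{\bm u}_{N,h}(0)=\Phi_h(\nu_h)\rightharpoonup\Phi(\nu)$ from \cref{lemma Phi convergence} forces $\bm u(0)=\Phi(\nu)$. Thus $(p,\bm u)$ is a weak solution of \cref{system state} for the parameter $\nu$, which by the well-posedness and uniqueness result \cite[Theorem~2.3]{ammann23} (uniqueness for the linear system being a routine energy estimate) coincides with the classical solution, so $p=S_p(\nu)$. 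Since every subsequence of the original families has a further subsequence converging to this same limit, the whole families converge, which is the assertion.

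The main obstacle is the product $\nu_h\,\partial_t\Lambda^p_{N,h}$ of two only weakly convergent sequences: it is resolved only by shifting the time derivative onto $\nu_h\Lambda^p_{N,h}\mathcal I_h\phi$, which presupposes the \emph{strong} $C(I,L^2(\Omega))$-convergence of $\Lambda^p_{N,h}$. Establishing that convergence is the second subtle point: it rests on the uniform $L^\infty(I,H^1_D(\Omega))$ bound for $\Lambda^p_{N,h}$, which cannot be obtained from the inverse estimate of \cref{lemma inverse estimate} (that is $h$-dependent and blows up like $1/h$) but only from the structural relation $\nabla p_h^l=-\delta\bm u_h^l$ built into \cref{leapfrog scheme}, combined with the uniform bound on $\{\delta\bm u_h^l\}$ provided by \cref{theorem stability}.
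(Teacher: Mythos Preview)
Your proposal is correct and follows essentially the same route as the paper: uniform bounds from \cref{theorem stability}, Aubin--Lions compactness for $\Lambda^p_{N,h}$, $O(\tau)$-comparison of the various interpolants, and passage to the limit in \cref{interpolation scheme state1} with the product $\nu_h\,\partial_t\Lambda^p_{N,h}$ handled by integrating by parts in time and exploiting the strong $C(I,L^2(\Omega))$-convergence of $\Lambda^p_{N,h}$. One cosmetic slip: $F_{N,h}$ is piecewise constant and hence not in $C(I,L^2(\Omega))$, so your justification actually yields $F_{N,h}\to F$ in $L^\infty(I,L^2(\Omega))$ (which is all you need), not in $C(I,L^2(\Omega))$.
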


\begin{proof}
	By \cref{theorem stability}, along with the definitions of the interpolations (see \cref{def lambda p}-\cref{def pi u}), the families
	\begin{align*}
		\{\Lambda^{p}_{N,h}\}_{h> 0}, \{\Pi^{p}_{N,h}\}_{h> 0}, \{\Theta^{p}_{N,h}\}_{h> 0},\{\partial_t\Lambda^{p}_{N,h}\}_{h> 0}
		&\subset L^\infty(I, L^2(\Omega))
		\\	
		\{\nabla\Theta^{p}_{N,h}\}_{h> 0}, \{\Lambda^{\bm u}_{N,h}\}_{h> 0},\{\Pi^{\bm u}_{N,h}\}_{h> 0},\{\partial_t\Lambda^{\bm u}_{N,h}\}_{h> 0}
		&\subset L^\infty(I, \bm L^2(\Omega))
	\end{align*}
	are bounded. Furthermore, it holds for every $t\in (t_l,t_{l+1}]$ and $l=0,\dots, N-1$ that
	\begin{align*}
		\|\nabla\Lambda^{p}_{N,h}(t)\|_{\bm L^2(\Omega)}
		&\underbrace{=}_{\cref{def lambda p}}\|\nabla(p_h^l + (t - t_l)\delta p_h^{l+\frac{1}{2}})\|_{\bm L^2(\Omega)}
		\\
		&\underbrace{\leq}_{\cref{def deltaphk}} \|\nabla p_h^l\|_{\bm L^2(\Omega)} + \|\nabla(p_h^{l+1} - p_h^l)\|_{\bm L^2(\Omega)},
	\end{align*}
	such that $\{\nabla\Lambda^{p}_{N,h}\}_{h> 0}\subset L^\infty(I, \bm L^2(\Omega))$ is also bounded due to \cref{theorem stability}. On the other hand, by the Aubin-Lions lemma, the following embedding is compact: 
	\begin{equation*}
		\{p\in L^\infty(I, H^1(\Omega)): \partial_t p\in L^\infty(I, L^2(\Omega))\}\overset{c}{\hookrightarrow}C(I, L^2(\Omega)).
	\end{equation*}
	Therefore, we find a subsequence (still denoted by the same symbol) and a $ p\in C(I, L^2(\Omega))$ such that 
	\begin{equation}\label{convergence lambda p}
		\Lambda^{p}_{N,h}\to p\quad\text{in } C(I, L^2(\Omega))\text{ as }h\to 0.
	\end{equation}
	Moreover, it holds that 
	\begin{align*}
		&\|\Lambda^{p}_{N,h} - \Theta^{p}_{N,h}\|_{L^\infty(I,L^2(\Omega))}\!\!\!\underbrace{\leq}_{\cref{def lambda p}, \cref{def theta p}}\!\!\!\max_{l\in\{0,\dots, N-1\}}\tau\|\delta p_h^{l+\frac{1}{2}}\|_{L^2(\Omega)}\!\!\!\underbrace{\leq}_{\text{Thm.}\ref{theorem stability}}\!\!\!\tau C
		\\
		&\hspace{.1cm}\underbrace{\leq}_{\cref{cfl}}c_{cfl}h C\to 0\quad\text{as }h\to 0
	\end{align*}
	and
	\begin{align*}
		&\|\Theta^{p}_{N,h} - \Pi^{p}_{N,h}\|_{L^\infty(I,L^2(\Omega))}\!\!\!\underbrace{\leq}_{\cref{def pi p},\cref{def theta p}}\!\!\!\max_{l\in\{0,\dots, N-1\}}\|p_h^l - p_h^{l+\frac{1}{2}}\|_{L^2(\Omega)}
		\\
		&\underbrace{=}_{\cref{def deltaphk}}\frac{1}{2}\max_{l\in\{0,\dots, N-1\}}\|p_h^{l+1} - p_h^l\|_{L^2(\Omega)}\underbrace{=}_{\cref{def deltaphk}}\max_{l\in\{0,\dots, N-1\}}\frac{\tau}{2}\|\delta p_h^{l+\frac{1}{2}}\|_{L^2(\Omega)}\!\!\underbrace{\leq}_{\text{Thm.}\ref{theorem stability}}\!\!\frac{\tau C}{2}
		\\
		&\hspace{-.04cm}\underbrace{\leq}_{\cref{cfl}}\frac{c_{cfl}h C}{2}\to 0\quad\text{as }h\to 0,
	\end{align*}
	such that, along with \cref{convergence lambda p}, it follows that
	\begin{equation}\label{theta pi p convergence}
		\Theta^{p}_{N,h}, \Pi^{p}_{N,h}\to p\quad\text{in } L^\infty(I, L^2(\Omega))\text{ as }h\to 0.
	\end{equation}
	Due to the Banach-Alaoglu theorem, there exist subsequences (denoted by the same indices), $\tilde p\in L^\infty(I, L^2(\Omega))$, and $\bm v,\bm u,\widehat{\bm u}, \tilde{\bm u}\in L^\infty(I, \bm L^2(\Omega))$ such that 
	\begin{align}
		\label{weak star convergence1}\partial_t\Lambda^{p}_{N,h}
		&\overset{*}{\rightharpoonup} \tilde p
		&&\text{weakly-* in } L^\infty(I, L^2(\Omega))
		\\\notag
		\nabla\Theta^{p}_{N,h}\overset{*}{\rightharpoonup}\bm v,\quad\Lambda^{\bm u}_{N,h}\overset{*}{\rightharpoonup}\bm u,\quad\Pi^{\bm u}_{N,h}\overset{*}{\rightharpoonup}\widehat{\bm u},\quad\partial_t\Lambda^{\bm u}_{N,h}
		&\overset{*}{\rightharpoonup}\tilde{\bm u}
		&&\text{weakly-* in } L^\infty(I, \bm L^2(\Omega)).
	\end{align}
	By standard argumentation, it follows that $\tilde p = \partial_t p$, $\bm v = \nabla p$, and $\tilde{\bm u} = \partial_t\bm u$. Furthermore, as above, we obtain that
	\begin{align}\label{strong convergence u difference}
		&\|\Lambda^{\bm u}_{N,h} - \Pi^{\bm u}_{N,h}\|_{L^\infty(I,L^2(\Omega))}\!\!\!\underbrace{\leq}_{\cref{def lambda u},\cref{def pi u}}\!\!\max_{l\in\{0,\dots, N-1\}}\!\left(\|\bm u_h^{l-\frac{1}{2}} - \bm u_h^{l+\frac{1}{2}}\|_{L^2(\Omega)} + \tau\|\delta \bm u_h^l\|_{L^2(\Omega)}\right)
		\\\notag
		&\hspace{.14cm}\underbrace{=}_{\cref{def deltauhk}}\max_{l\in\{0,\dots, N-1\}}2\tau\|\delta\bm u_h^l\|_{L^2(\Omega)}\!\!\underbrace{\leq}_{\text{Thm.}\ref{theorem stability}}\!\!2\tau C\underbrace{\leq}_{\cref{cfl}}\! 2c_{cfl}h C\to 0,\,\,\text{as }h\to 0,
	\end{align}
	such that, along with \cref{weak star convergence1}, it follows $\bm u = \widehat{\bm u}$. Next, let us prove that $( p, \bm u)$ is the unique solution to \cref{system state} associated with $\nu$. Let $t\in I\setminus\{0\}$ be arbitrarily given. Then, for every $h>0$, there exists some $l(h)\in\{0,\dots,N(h)-1\}$ such that $t\in(t_{l(h)},t_{l(h)+1}]$. In particular, we have that $t_{l(h)+\frac{1}{2}}\to t$ as $h\to 0$, and since $F\in W^{1,1}(I, L^2(\Omega))$, this implies that
	\begin{equation*}
		\|F_{N,h}(t)- F(t)\|_{L^2(\Omega)}\underbrace{=}_{\cref{def F interpolate}} \|F(t_{l(h)+\frac{1}{2}}) - F(t)\|_{L^2(\Omega)}\leq\int_{t}^{t_{l(h)+\frac{1}{2}}}\!\|\partial_tF(s)\|_{L^2(\Omega)}\d s\to 0
	\end{equation*}
	as $h\to 0$. Along with $\|F_{N,h}(t)\|_{L^2(\Omega)}\leq \|F\|_{C(I, L^2(\Omega))}$ for every $t\in I$ (see \cref{def F interpolate}), Lebesgue's dominated convergence theorem implies that
	\begin{equation}\label{convergence F}
		F_{N,h}\to F\text{ in }L^2(I, L^2(\Omega))\quad\text{as }h\to 0.
	\end{equation}
	Since $\{\nu_h\}_{h> 0}$ converges weakly in $L^2(\Omega)$ towards $\nu$, along with \cref{convergence lambda p}, it holds that for every $\phi\in L^2(I, C^\infty(\Omega))$ that
	\begin{align}\label{nuh pnh convergence1}
		&|(\nu_h\Lambda^{p}_{N,h} - \nu p, \phi)_{L^2(I, L^2(\Omega))}|
		\\\notag
		&\leq |(\nu_h(\Lambda^{p}_{N,h} - p), \phi)_{L^2(I, L^2(\Omega))}| + |((\nu_h - \nu)\phi, p)_{L^2(I, L^2(\Omega))}|
		\\\notag
		&\leq|((\nu_h - \nu)\phi,(\Lambda^{p}_{N,h} - p))_{L^2(I, L^2(\Omega))}| + |((\Lambda^{p}_{N,h} - p),\nu\phi)_{L^2(I, L^2(\Omega))}| \\\notag
 		&\quad+ |((\nu_h - \nu)\phi, p)_{L^2(I, L^2(\Omega))}|\to 0\quad\text{as }h\to 0.
	\end{align}
	Due to \cref{convergence lambda p} and $\{\nu_h\}_{h>0}\subset\mathcal V_{ad}$, the sequence $\{\nu_h\Lambda^{p}_{N,h}\}_{h> 0}\subset L^2(I, L^2(\Omega))$ is bounded. Thus, along with the density $C^\infty(I, L^2(\Omega))\subset L^2(I, L^2(\Omega))$, \cref{nuh pnh convergence1} implies that
	\begin{equation}\label{nuh lambda p convergence}
		\nu_h\Lambda^{p}_{N,h}\rightharpoonup \nu p\quad\text{weakly in }L^2(I, L^2(\Omega))\quad\text{as }h\to 0.
	\end{equation}
	Moreover, for every $\phi\in C^\infty_0(I, L^2(\Omega))$, it holds for $h\to 0$ that 
	\begin{align}\label{nuh pnh convergence2}
		&(\nu_h\partial_t\Lambda^{p}_{N,h}, \phi)_{L^2(I, L^2(\Omega))} = -(\nu_h\Lambda^{p}_{N,h}, \partial_t\phi)_{L^2(I, L^2(\Omega))}
		\\\notag
		&\underbrace{\to}_{\cref{nuh lambda p convergence}} -(\nu p, \partial_t\phi)_{L^2(I, L^2(\Omega))}= (\nu\partial_t p, \phi)_{L^2(I, L^2(\Omega))}.
	\end{align}
	Due to \cref{weak star convergence1} and $\{\nu_h\}_{h>0}\subset\mathcal V_{ad}$, the sequence $\{\nu_h\partial_t\Lambda^{p}_{N,h}\}_{h> 0}\subset L^2(I, L^2(\Omega))$ is also bounded. Therefore, along with the density $C^\infty_0(I, L^2(\Omega)\subset L^2(I, L^2(\Omega))$, \cref{nuh pnh convergence2} implies that
	\begin{equation}\label{nuh lambda p weak convergence}
		\nu_h\partial_t\Lambda^{p}_{N,h}\rightharpoonup \nu\partial_t p\quad\text{weakly in }L^2(I, L^2(\Omega))\quad\text{as }h\to 0.
	\end{equation}
	Using the $\P^h_1$-interpolation operator $\mathcal I_h$ (see \cref{definition standard interpolation}), we deduce for every $\phi\in C^\infty_D(\Omega)$ and $\psi\in C^\infty_0(I)$ that 
	\begin{align*}
		&\int_I\int_\Omega(\nu\partial_t p(t) + \eta p(t))\phi - \bm u(t)\cdot\nabla\phi\d x\psi(t)\d t
		\\
		&=\lim_{h\to 0}\int_I\int_\Omega(\nu_h\partial_t \Lambda^{p}_{N,h}(t) + \eta \Pi^{p}_{N,h}(t))\mathcal I_h\phi - \Pi^{\bm u}_{N,h}(t)\cdot\nabla(\mathcal I_h\phi)\d x \psi(t)\d t
		\\
		&=\lim_{h\to 0}\int_I\int_\Omega F_{N,h}(t)\mathcal I_h\phi\d x\psi(t)\d t=\int_I\int_\Omega F(t)\phi\d x\psi(t)\d t,
	\end{align*}
 where we have used \cref{pih convergence2},\cref{interpolation scheme state1},\cref{theta pi p convergence},\cref{weak star convergence1},\cref{convergence F}, and \cref{nuh lambda p weak convergence}. Since $H^1_D(\Omega) = \overline{C^\infty_D(\Omega)}^{\|\cdot\|_{H^1(\Omega)}}$, it follows for all $\phi\in H^1_D(\Omega)$ and almost every $t\in I$ that 
	\begin{align}\label{conv eps h}
		&\int_\Omega(\nu\partial_t p(t) + \eta p(t))\phi - \bm u(t)\cdot\nabla\phi\d x
		=\int_\Omega F(t)\phi\d x.
	\end{align}
	Furthermore, by \cref{weak star convergence1} and \cref{interpolation scheme state1}, we obtain that
	\begin{equation}\label{state equation in proof}
		\partial_t\bm u = -\nabla p.	
	\end{equation}
	Regarding the initial conditions, it holds that
	\begin{equation}\label{state equation in proof2}
		p(0)\underbrace{=}_{\cref{convergence lambda p}}\lim_{h\to 0}\Lambda^{p}_{N,h}(0) \underbrace{=}_{\cref{interpolation scheme state1}} \lim_{h\to 0}\Psi_h(p_0)\,\underbrace{=}_{\cref{convergence psi}}\, p_0\quad\text{in }L^2(\Omega).
	\end{equation}
	Let $\bm\phi\in \bm L^2(\Omega)$ be arbitrarily fixed and $\xi\in C^\infty(I)$ such that $\xi(T) = 0$ and $\xi(0) = 1$. We define $[\bm\phi\xi]\in C^\infty(I, \bm L^2(\Omega))$ by $[\bm\phi\xi](t)(x) = \bm\phi(x)\xi(t)$ for all $t\in I$ and a.e. $x\in\Omega$. Then, along with the integration by parts formula, it follows that
	\begin{align*}
		&(\bm\phi,\Phi(\nu))_{\bm L^2(\Omega)}\underbrace{=}_{\text{Lem.}\ref{lemma Phi convergence}}\lim_{h\to 0}(\bm\phi,\Phi_h(\nu_h))_{\bm L^2(\Omega)}\underbrace{=}_{\cref{interpolation scheme state1}}\lim_{h\to 0}(\bm\phi,\Lambda_{N,h}^{\bm u}(0))_{\bm L^2(\Omega)} 
		\\
		&=\lim_{h\to 0}\left(-\int_I(\bm\phi,\partial_t\Lambda_{N,h}^{\bm u}(t))_{\bm L^2(\Omega)}\xi(t)\d t - \int_I(\bm\phi,\Lambda_{N,h}^{\bm u}(t))_{\bm L^2(\Omega)}\partial_t\xi(t)\d t \right)
		\\
		&\hspace{-.18cm}\underbrace{=}_{\cref{weak star convergence1}}
		-\int_I(\bm\phi,\partial_t\bm u(t))_{\bm L^2(\Omega)}\xi(t)\d t - \int_I(\bm\phi,\bm u(t))_{L^2(\Omega)}\partial_t\xi(t)\d t 
		\\
		&= (\bm\phi,\bm u(0))_{\bm L^2(\Omega)}.
	\end{align*}
	Since $\bm\phi\in\bm L^2(\Omega)$ was chosen arbitrarily, this implies $\bm u(0) = \Phi(\nu)$. Finally, from \cref{conv eps h}-\cref{state equation in proof2}, we conclude that 
 \begin{equation*}
 (p,\bm u)\in W^{1,\infty}(I, L^2(\Omega))\cap L^2(I, H^1_D(\Omega))\times W^{1,\infty}(I, \bm L^2(\Omega))\cap L^2(I, \bm H_N(\Div,\Omega)) 
 \end{equation*}
 satisfies
	\begin{equation}\label{system weak}
		\left\{\begin{aligned}
			&\nu\partial_t p + \Div\bm u + \eta p = F
			&&\text{in }I\times\Omega
			\\
			&\partial_t\bm u + \nabla p = 0
			&&\text{in }I\times\Omega
			\\
			&p = 0 
			&& \text{on }I\times \Gamma_D
			\\
			&\bm{u} \cdot\bm{n} = 0 
			&&\text{on }I\times \Gamma_N
			\\
			&(p, \bm u)(0) = (p_0,\Phi(\nu))
			&&\text{in }\Omega.
		\end{aligned}\right.
	\end{equation}
	Thus, $(p,\bm u)$ is the strong solution to \cref{system state} associated with $\nu$. Since the unique classical and strong solutions to \cref{system state} coincide, the claim follows.
\end{proof}

To prove our final result, that is \cref{theorem convergence}, we need the following lemma:

\begin{lemma}\label{lemma convergence objective}
	Let \cref{assumption} hold and for every $h>0$, let $\frac{1}{\tau} = N = N(h)\in\mathbb N$ satisfy the {CFL}-condition \cref{cfl}. Furthermore, let $\{\nu_h\}_{h>0}$ with $\nu_h\in\mathcal V_{ad}^h$ for all $h>0$ be given. Then, it holds that 
	\begin{equation}\label{first claim}
		|J(\nu_h) - J_h(\nu_h)|\to 0\quad\text{as }h\to 0.
	\end{equation}
	If additionally $\{\nu_h\}_{h>0}$ converges (strongly) in $L^2(\Omega)$ towards $\nu\in\mathcal V_{ad}$ as $h\to 0$, then it holds that 
	\begin{equation}\label{second claim}
		|J(\nu) - J_h(\nu_h)|\to 0\quad\text{as }h\to 0.
	\end{equation}
\end{lemma}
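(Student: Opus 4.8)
The plan is to prove the first claim \eqref{first claim} by comparing the two objective functionals term by term, then derive the second claim \eqref{second claim} by combining \eqref{first claim} with the continuity of $J$ on $\mathcal{V}_{ad}$. For \eqref{first claim}, the key observation is that $J(\nu_h)=\mathcal J(\nu_h, S_p(\nu_h))$ involves the exact state $p=S_p(\nu_h)$ and its exact time integral, whereas $J_h(\nu_h)=\mathcal J_h(\nu_h, S_{h,p}(\nu_h))$ involves the leapfrog approximation together with the midpoint quadrature rule. The regularization terms $\tfrac{\lambda}{2}\|\nu_h\|_{L^2(\Omega)}^2$ coincide exactly, so they cancel. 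For the tracking terms, I would rewrite the exact integral $\int_I\int_\Omega a_i(p-p_i^{ob})^2\,\d x\,\d t$ and the discrete sum $\tau\sum_{l=0}^{N-1}\int_\Omega a_i(t_{l+\frac12})(p_h^{l+\frac12}-p_i^{ob}(t_{l+\frac12}))^2\,\d x$ both in terms of the interpolation operators introduced in \eqref{def pi p}, \eqref{def pob interpolate}, \eqref{def ai interpolate}: the discrete sum equals $\int_I\int_\Omega a_{i,N,h}(\Pi^p_{N,h}-p^{ob}_{i,N,h})^2\,\d x\,\d t$ exactly, since $\Pi^p_{N,h}$, $a_{i,N,h}$, $p^{ob}_{i,N,h}$ are piecewise constant in time with value $p_h^{l+\frac12}$, $a_i(t_{l+\frac12})$, $p_i^{ob}(t_{l+\frac12})$ on $(t_l,t_{l+1}]$. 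Thus \eqref{first claim} reduces to showing
\begin{equation*}
	\left|\int_I\!\!\int_\Omega a_i(p-p_i^{ob})^2\,\d x\,\d t - \int_I\!\!\int_\Omega a_{i,N,h}(\Pi^p_{N,h}-p^{ob}_{i,N,h})^2\,\d x\,\d t\right|\to 0.
\end{equation*}

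For this, I would invoke \cref{theorem interpolations} to get $\Pi^p_{N,h}\to p$ in $L^\infty(I,L^2(\Omega))$, establish $a_{i,N,h}\to a_i$ in $L^\infty(I,L^\infty(\Omega))$ from the continuity $a_i\in C^1(I,L^\infty(\Omega))$ together with $t_{l+\frac12}\to t$ and dominated convergence (exactly as $F_{N,h}\to F$ was handled in the proof of \cref{theorem interpolations}), and establish $p^{ob}_{i,N,h}\to p_i^{ob}$ in $L^2(I,L^2(\Omega))$ from $p_i^{ob}\in W^{1,1}(I,L^2(\Omega))$ by the same argument used for $F_{N,h}$. With these three convergences, together with the uniform $L^\infty(I,L^2(\Omega))$-bound on $\Pi^p_{N,h}$ from \cref{theorem stability} and the uniform $L^\infty$-bound on $a_{i,N,h}$, the difference of the two quadratic forms is controlled by standard manipulations: write $a(p-q)^2-a'(p'-q')^2 = (a-a')(p-q)^2 + a'\big((p-q)^2-(p'-q')^2\big)$ and expand the second bracket as $\big((p-q)+(p'-q')\big)\big((p-q)-(p'-q')\big)$, then apply Hölder's inequality in $L^2(I\times\Omega)$ using the uniform bounds. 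Summing over $i=1,\dots,m$ yields \eqref{first claim}.

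For the second claim \eqref{second claim}, I would use the triangle inequality
\begin{equation*}
	|J(\nu)-J_h(\nu_h)| \leq |J(\nu)-J(\nu_h)| + |J(\nu_h)-J_h(\nu_h)|.
\end{equation*}
The second summand tends to zero by \eqref{first claim}. For the first summand, I need continuity of the reduced exact cost $J$ along $\nu_h\to\nu$ strongly in $L^2(\Omega)$; this follows since $S_p$ is continuous from $\mathcal V_{ad}$ (with the $L^2(\Omega)$-topology) into $C(I,L^2(\Omega))$ — a stability property of the continuous state equation \eqref{system state} that is implicit in the well-posedness theory of \cite{ammann23} — and the tracking functional depends continuously on its state argument in $C(I,L^2(\Omega))$, while the regularization term is manifestly continuous in $L^2(\Omega)$. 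Alternatively, and more in the spirit of the present discrete analysis, one could observe that $\nu_h\rightharpoonup\nu$ weakly suffices to apply \cref{theorem interpolations} so that $\Pi^p_{N,h}\to p=S_p(\nu)$; combined with $J_h(\nu_h)\to$ the tracking expression evaluated at $p$ plus $\tfrac{\lambda}{2}\|\nu\|^2_{L^2(\Omega)}$ (using strong convergence $\nu_h\to\nu$ for the regularization term and the same quadratic-form estimate as above), this directly gives $J_h(\nu_h)\to J(\nu)$.

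The main obstacle is the bookkeeping in the quadratic-form comparison: one must be careful to route each of the three perturbations ($a_i\to a_{i,N,h}$, $p\to\Pi^p_{N,h}$, $p_i^{ob}\to p^{ob}_{i,N,h}$) through the correct norm so that a uniform bound is available for the factor that is not being estimated — in particular, the $L^\infty$-in-space convergence of $a_{i,N,h}$ is needed precisely because $(\Pi^p_{N,h}-p^{ob}_{i,N,h})^2$ is only controlled in $L^1(I\times\Omega)$, whereas the $L^2(I\times\Omega)$-convergence of $p^{ob}_{i,N,h}$ pairs against the $L^2(I\times\Omega)$-boundedness of $\Pi^p_{N,h}+p^{ob}_{i,N,h}$ and the $L^\infty$-boundedness of $a_{i,N,h}$. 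None of the individual steps is deep, but the estimate must be assembled in the right order.
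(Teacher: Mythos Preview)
Your overall strategy — rewriting the discrete sum as $\int_I\int_\Omega a_{i,N,h}(\Pi^p_{N,h}-p^{ob}_{i,N,h})^2\,\d x\,\d t$, establishing the convergences of $a_{i,N,h}$ and $p^{ob}_{i,N,h}$ by the same dominated-convergence argument used for $F_{N,h}$, and expanding the quadratic difference via $(a-a')(p-q)^2 + a'\big((p-q)+(p'-q')\big)\big((p-q)-(p'-q')\big)$ — is exactly what the paper does. Your treatment of the second claim is also fine; the paper uses your second alternative.

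There is, however, a genuine gap in your handling of \eqref{first claim}. You write ``invoke \cref{theorem interpolations} to get $\Pi^p_{N,h}\to p$'' where $p=S_p(\nu_h)$, but \cref{theorem interpolations} does \emph{not} say this: it requires a weak limit $\nu$ of $\{\nu_h\}$ and delivers $\Pi^p_{N,h}\to S_p(\nu)$, not $\Pi^p_{N,h}-S_p(\nu_h)\to 0$. For \eqref{first claim} no convergence of $\{\nu_h\}$ is assumed, and the object you must control is precisely $\|S_p(\nu_h)-\Pi^p_{N,h}\|_{L^2(I,L^2(\Omega))}$, with both terms depending on $h$. The paper closes this gap as follows: since $\mathcal V_{ad}$ is bounded, closed and convex in $L^2(\Omega)$, it is weakly sequentially compact, so one may pass to a subsequence with $\nu_h\rightharpoonup\tilde\nu\in\mathcal V_{ad}$. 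Along this subsequence \cref{theorem interpolations} gives $\Pi^p_{N,h}\to\tilde p\coloneqq S_p(\tilde\nu)$ in $L^\infty(I,L^2(\Omega))$, while the weak-strong continuity of $S_p$ established in \cite[proof of Theorem~2.4]{ammann23} gives $p_h\coloneqq S_p(\nu_h)\to\tilde p$ in $L^2(I,L^2(\Omega))$. Both sequences therefore have the \emph{same} limit $\tilde p$, whence $\|p_h-\Pi^p_{N,h}\|_{L^2(I,L^2(\Omega))}\to 0$, and the quadratic-form estimate then finishes as you outlined. You already invoke the continuity of $S_p$ for the second claim; the point is that it is indispensable for the first claim as well, together with the subsequence extraction.
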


\begin{proof}
	Let $p_h\in C^1(I, L^2(\Omega))\cap C(I, H^1_D(\Omega))$ denote the first component of the unique solution to \cref{system state} associated with $\nu_h$. Furthermore, for every $h>0$, let $(\{p_h^l\}_{l=0}^N,\{\bm u_h^{l+\frac{1}{2}}\}_{l=0}^{N-1})\in(\P_{1,D}^h)^{N+1}\times(\DGbm_0^h)^N$ denote the unique solution to \cref{leapfrog scheme} and let $\Pi^{p}_{N,h}$ denote the corresponding interpolate as in \cref{def pi p}. Note that $\mathcal V_{ad}^h\subset\mathcal V_{ad}$. Moreover, $\mathcal V_{ad}$ is bounded, closed, and convex in $L^2(\Omega)$, and thus, it is weakly sequentially compact in $L^2(\Omega)$. Consequently, there exists a subsequence of $\{\nu_h\}_{h>0}$ (still denoted by the same symbol) and a $\tilde\nu\in\mathcal V_{ad}$ such that $\{\nu_h\}_{h>0}$ convergences weakly in $L^2(\Omega)$ towards $\tilde\nu$ as $h\to 0$. Thus, \cref{theorem interpolations} implies that 
	\begin{equation}\label{convergence widehatp}
		\Pi^{p}_{N,h}\to\tilde p \quad\text{in }L^\infty(I, L^2(\Omega))\text{ as } h\to 0,
	\end{equation}
	where $\tilde p\in C^1(I, L^2(\Omega))\cap C(I, H^1_D(\Omega))$ denotes first component of the unique solution to \cref{system state} associated with $\tilde\nu$. Furthermore, the solution operator $S_p\colon L^2(\Omega)\to L^2(I, L^2(\Omega)), \nu\mapsto p$ to \cref{system state} is weak-strong continuous (see the proof of \cite[Theorem 2.4]{ammann23}) such that 
	\begin{equation}\label{convergence ph}
		p_h\to\tilde p \quad\text{in }L^2(I, L^2(\Omega))\quad\text{ as } h\to 0.
	\end{equation}
	Since $p_i^{ob}\in W^{1,1}(I, L^2(\Omega))$, as in \cref{convergence F}, it follows that 
	\begin{equation}\label{convergence piob}
		 p_{i,N,h}^{ob}\to p^{ob}_i\quad\text{in }L^2(I, L^2(\Omega))\quad\text{as }h\to 0\quad\forall i\in\{1,\dots,m\}.
	\end{equation}
	Now, it holds that
	\begin{align}\label{J - Jh}
		&|J(\nu_h) - J_h(\nu_h)|
		\\\notag
		&\underbrace{=}_{\cref{P},\cref{P discretized reduced}} \Bigg|\frac{1}{2} \sum_{i=1}^m \int_I\int_{\Omega}a_i(t)(p_h(t)-p^{ob}_i(t))^2\d x\d t 
		\\\notag
		&\hspace{1.3cm}- \frac{\tau}{2} \sum_{i=1}^m\sum_{l=0}^{N-1}\int_{\Omega}a_i(t_{l+\frac{1}{2}})(p_h^{l+\frac{1}{2}}-p^{ob}_{i}(t_{l+\frac{1}{2}}))^2\d x\Bigg|
		\\\notag
		&\underbrace{=}_{\cref{def pi p},\cref{def ai interpolate},\cref{def pob interpolate}}\Bigg|\frac{1}{2} \sum_{i=1}^m \int_I\int_{\Omega}a_i(t)(p_h(t)-p^{ob}_i(t))^2\d x\d t 
		\\\notag
		&\hspace{2.2cm}- \frac{1}{2} \sum_{i=1}^m \int_I\int_{\Omega}a_{i,N,h}(t)(\Pi^{ p}_{N,h}(t)- p_{i,N,h}^{ob}(t))^2\d x\d t \Bigg|
		\\\notag
		&\leq\frac{1}{2}\sum_{i=1}^m\|a_i\|_{C(I,L^\infty(\Omega))}\int_I\int_{\Omega}|(p_h(t)-p^{ob}_i(t))^2 - (\Pi^{ p}_{N,h}(t) - p_{i,N,h}^{ob}(t))^2|\d x\d t
		\\\notag
		&\quad+\frac{1}{2}\sum_{i=1}^m\|a_i - a_{i,N,h}\|_{L^1(I, L^\infty(\Omega))}\|\Pi_{N,h}^p - p_{i,N,h}^{ob}\|_{L^\infty(I, L^2(\Omega))}^2
		\\\notag
		&\leq\frac{1}{2}\sum_{i=1}^m\|a_i\|_{C(I,L^\infty(\Omega))}\int_I\int_{\Omega}|(p_h(t)-p^{ob}_i(t) - \Pi^{ p}_{N,h}(t) + p_{i,N,h}^{ob}(t))\cdot
		\\\notag
		&\hspace{4.6cm}(p_h(t) - p^{ob}_i(t) + \Pi^{p}_{N,h}(t) - p_{i,N,h}^{ob}(t))|\d x\d t
		\\\notag
		&\quad+\sum_{i=1}^m\|a_i - a_{i,N,h}\|_{L^1(I, L^\infty(\Omega))}(\|\Pi_{N,h}^p\|_{L^\infty(I, L^2(\Omega))}^2 + \|p_{i,N,h}^{ob}\|_{L^\infty(I, L^2(\Omega))}^2)
		\\\notag
		&\leq\frac{1}{2}\sum_{i=1}^m\Big(\|a_i\|_{C(I,L^\infty(\Omega))}(\|p_h - \Pi^{ p}_{N,h}\|_{L^2(I,L^2(\Omega))} + \|p^{ob}_i - p_{i,N,h}^{ob}\|_{L^2(I,L^2(\Omega))})\cdot
		\\\notag
		&\hspace{1.6cm}(\|p_h\|_{L^2(I,L^2(\Omega))} + \|p^{ob}_i\|_{L^2(I,L^2(\Omega))} + \|\Pi^{p}_{N,h}\|_{L^2(I,L^2(\Omega))} 
		\\\notag
		&\hspace{1.7cm}+ \| p_{i,N,h}^{ob}\|_{L^2(I,L^2(\Omega))})\Big)
		\\\notag
		&\quad+\sum_{i=1}^m\|a_i - a_{i,N,h}\|_{L^1(I, L^\infty(\Omega))}(\|\Pi_{N,h}^p\|_{L^\infty(I, L^2(\Omega))}^2 + \|p_{i,N,h}^{ob}\|_{L^\infty(I, L^2(\Omega))}^2).
	\end{align}
	Since $a_i\in C^1(I, L^\infty(\Omega))$, as in \cref{convergence F}, it follows that 
	\begin{equation}\label{ai convergence}
		a_{i,N,h}\to a_i\quad\text{in }L^1(I, L^\infty(\Omega))	\quad\text{as }h\to 0\quad\forall i\in \{1,\dots,m\}.
	\end{equation} 
	Moreover, since $\{\nu_h\}_{h>0}\subset L^\infty(\Omega)$ is bounded, $\{p_h\}_{h>0}\subset L^2(I, L^2(\Omega))$ is bounded due to \cite[Lemma 2.2]{ammann23}. Furthermore, $\{\Pi^{p}_{N,h}\}_{h>0}\subset L^\infty(I, L^2(\Omega))$ is bounded due to \cref{convergence widehatp} and $\{ p_{i,N,h}^{ob}\}_{h>0}\subset L^\infty(I, L^2(\Omega))$ is bounded due to \cref{def pob interpolate}. Consequently \cref{convergence widehatp}-\cref{ai convergence} imply \cref{first claim}. Now, let additionally $\{\nu_h\}_{h>0}$ converge strongly in $L^2(\Omega)$ towards $\nu\in\mathcal V_{ad}$. Then, due to \cref{theorem interpolations}, we have that 
	\begin{equation}\label{pi p convergence}
		\Pi^{p}_{N,h}\to p\quad\text{in } L^\infty(I, L^2(\Omega))\text{ as }h\to 0,
	\end{equation}
	where $p\in C^1(I, L^2(\Omega))\cap C(I, H^1_D(\Omega))$ is the first component of the unique solution to \cref{system state} associated with $\nu$. Analogously to \cref{J - Jh}, we deduce that 
	\begin{align*}
		&|J(\nu) - J_h(\nu_h)|
		\\
		&\underbrace{=}_{\cref{P},\cref{P discretized reduced}} \Bigg|\frac{1}{2} \sum_{i=1}^m \int_I\int_{\Omega}a_i(t)(p(t)-p^{ob}_i(t))^2\d x\d t + \frac{\lambda}{2}\|\nu\|_{L^2(\Omega)}^2 
		\\
		&\hspace{1.3cm}- \frac{\tau}{2} \sum_{i=1}^m\sum_{l=0}^{N-1}\int_{\Omega}a_i(t_{l+\frac{1}{2}})(p_h^{l+\frac{1}{2}}-p^{ob}_{i}(t_{l+\frac{1}{2}}))^2\d x- \frac{\lambda}{2}\|\nu_h\|_{L^2(\Omega)}^2\Bigg|
		\\
		&\hspace{.35cm}\leq\frac{1}{2}\sum_{i=1}^m\Big(\|a_i\|_{C(I, L^\infty(\Omega))}(\|p - \Pi^{p}_{N,h}\|_{L^2(I,L^2(\Omega))} + \|p^{ob}_i - p_{i,N,h}^{ob}\|_{L^2(I,L^2(\Omega))})\cdot
		\\
		&\hspace{2cm}(\|p\|_{L^2(I,L^2(\Omega))} + \|p^{ob}_i\|_{L^2(I,L^2(\Omega))} + \|\Pi^{p}_{N,h}\|_{L^2(I,L^2(\Omega))} 
		\\
		&\hspace{2.1cm}+ \| p_{i,N,h}^{ob}\|_{L^2(I,L^2(\Omega))})\Big)
		\\
		&\hspace{.79cm}+ \sum_{i=1}^m\|a_i - a_{i,N,h}\|_{L^1(I, L^\infty(\Omega))}(\|\Pi_{N,h}^p\|_{L^2(I, L^2(\Omega))}^2 + \|p_{i,N,h}^{ob}\|_{L^2(I, L^2(\Omega))}^2) 
		\\\notag
		&\hspace{.79cm}+ \frac{\lambda}{2} |\|\nu\|_{L^2(\Omega)}^2 - \|\nu_h\|_{L^2(\Omega)}^2|,
	\end{align*}
	where the right-hand side converges to $0$, due to \cref{convergence piob},\cref{ai convergence}, and \cref{pi p convergence}.
\end{proof}

\begin{theorem}\label{theorem convergence}
	Let \cref{assumption} hold and for every $h>0$, let $\frac{1}{\tau} = N = N(h)\in\mathbb N$ satisfy the {CFL}-condition \cref{cfl}. Furthermore, let $\overline\nu\in\mathcal V_{ad}$ be a locally optimal solution to \cref{P} such that the following quadratic growth condition holds:
	\begin{equation}\label{growth3}
		\exists\, \sigma, \delta >0\,:\quad J(\nu)\geq J(\overline\nu) + \delta\|\nu-\overline\nu\|_{L^2(\Omega)}^2\quad\forall \nu\in\mathcal V_{ad}\quad\text{with}\quad\|\nu - \overline\nu\|_{L^2(\Omega)}^2\leq\sigma.
	\end{equation}
	Then, there exists a sequence $\{\overline\nu_h\}_{h>0}$ with $\nu_h\in\mathcal V_{ad}^h$ such that 
	\begin{equation*}
		\overline\nu_h\to\overline\nu\quad\text{in }L^2(\Omega)\text{ as }h\to 0,
	\end{equation*}
	and $\overline \nu_h \in\mathcal V_{ad}^h$ is a locally optimal solution to \cref{P discretized reduced} for all sufficiently small $h>0$. 
\end{theorem}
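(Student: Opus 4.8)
The plan is to use the classical localization device for discretized optimal control problems. For each $h>0$, introduce the auxiliary problem
\begin{equation*}
	\min_{\nu_h\in\mathcal V_{ad}^h,\ \|\nu_h-\overline\nu\|_{L^2(\Omega)}^2\leq\sigma} J_h(\nu_h),
\end{equation*}
whose admissible set is a closed and bounded, hence compact, subset of the finite-dimensional space $\DG_0^h$. It is nonempty for all sufficiently small $h$: the elementwise $L^2(\Omega)$-mean $\nu_h^\ast\in\DG_0^h$ of $\overline\nu$ lies in $\mathcal V_{ad}^h$ (averaging over each element preserves the pointwise bounds $\nu_-\leq\cdot\leq\nu_+$) and satisfies $\nu_h^\ast\to\overline\nu$ in $L^2(\Omega)$ as $h\to0$, so $\|\nu_h^\ast-\overline\nu\|_{L^2(\Omega)}^2\leq\sigma$ once $h$ is small. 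Since $J_h$ is continuous on $\mathcal V_{ad}^h$ (cf.\ the proof of \cref{theorem existence}), the Weierstrass theorem yields a global minimizer $\overline\nu_h$ of the auxiliary problem; for the remaining (large) $h$, set $\overline\nu_h$ to be an arbitrary element of $\mathcal V_{ad}^h$.

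Next I would show $\overline\nu_h\to\overline\nu$ in $L^2(\Omega)$. By construction $\|\overline\nu_h-\overline\nu\|_{L^2(\Omega)}^2\leq\sigma$ and $J_h(\overline\nu_h)\leq J_h(\nu_h^\ast)$ for small $h$; as $\nu_h^\ast\to\overline\nu$ strongly, \cref{lemma convergence objective} gives $\limsup_{h\to0}J_h(\overline\nu_h)\leq\lim_{h\to0}J_h(\nu_h^\ast)=J(\overline\nu)$, and its first claim gives $\limsup_{h\to0}J(\overline\nu_h)=\limsup_{h\to0}J_h(\overline\nu_h)\leq J(\overline\nu)$. Since $\mathcal V_{ad}$ is bounded, closed, and convex in $L^2(\Omega)$, every subsequence of $\{\overline\nu_h\}$ has a further subsequence converging weakly in $L^2(\Omega)$ to some $\nu^\ast\in\mathcal V_{ad}$; by weak lower semicontinuity of the norm $\|\nu^\ast-\overline\nu\|_{L^2(\Omega)}^2\leq\sigma$, and since $S_p$ is weak-strong continuous (cf.\ the proof of \cite[Theorem 2.4]{ammann23}) the tracking term of $J$ is weakly continuous while $\tfrac{\lambda}{2}\|\cdot\|_{L^2(\Omega)}^2$ is weakly lower semicontinuous, so $J(\nu^\ast)\leq\liminf_{h\to0}J(\overline\nu_h)\leq J(\overline\nu)$. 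The growth condition \cref{growth3} then forces $J(\overline\nu)+\delta\|\nu^\ast-\overline\nu\|_{L^2(\Omega)}^2\leq J(\nu^\ast)\leq J(\overline\nu)$, i.e.\ $\nu^\ast=\overline\nu$; as this applies along every weakly convergent subsequence, $\overline\nu_h\rightharpoonup\overline\nu$ in $L^2(\Omega)$ and, combining the two bounds, $J(\overline\nu_h)\to J(\overline\nu)$. Invoking \cref{growth3} once more for $\overline\nu_h$ itself (which is admissible and satisfies $\|\overline\nu_h-\overline\nu\|_{L^2(\Omega)}^2\leq\sigma$) yields $\delta\|\overline\nu_h-\overline\nu\|_{L^2(\Omega)}^2\leq J(\overline\nu_h)-J(\overline\nu)\to0$, hence $\overline\nu_h\to\overline\nu$ strongly in $L^2(\Omega)$.

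Finally I would establish local optimality of $\overline\nu_h$ for \cref{P discretized reduced}. Since $\overline\nu_h\to\overline\nu$ strongly, for all sufficiently small $h$ we have $\|\overline\nu_h-\overline\nu\|_{L^2(\Omega)}<\sqrt\sigma$; set $\rho_h:=\sqrt\sigma-\|\overline\nu_h-\overline\nu\|_{L^2(\Omega)}>0$. For every $\nu_h\in\mathcal V_{ad}^h$ with $\|\nu_h-\overline\nu_h\|_{L^2(\Omega)}\leq\rho_h$, the triangle inequality gives $\|\nu_h-\overline\nu\|_{L^2(\Omega)}\leq\sqrt\sigma$, so $\nu_h$ is admissible for the auxiliary problem and therefore $J_h(\overline\nu_h)\leq J_h(\nu_h)$; thus $\overline\nu_h$ is a local minimizer of \cref{P discretized reduced}.

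The main obstacle is the middle step: one must carefully interlace the weak compactness of $\mathcal V_{ad}$, the weak sequential lower semicontinuity of $J$, and the transfer estimates of \cref{lemma convergence objective} to obtain both $\limsup J(\overline\nu_h)\leq J(\overline\nu)$ and $\liminf J(\overline\nu_h)\geq J(\overline\nu)$, and then exploit \cref{growth3} twice — first to identify the weak limit with $\overline\nu$ and upgrade convergence of the objective values, then to convert weak convergence of $\{\overline\nu_h\}$ into strong convergence. The $\sigma$-ball localization is precisely what renders the growth condition applicable, and it must be \emph{opened up} at the very end, using the now-established strong convergence, to deduce genuine local optimality in \cref{P discretized reduced}.
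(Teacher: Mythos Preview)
Your proposal is correct and shares the paper's overall skeleton: introduce the $\sigma$-localized auxiliary problem, show it is nonempty via the elementwise $L^2$-projection $Q_h\overline\nu$, extract a minimizer $\overline\nu_h$ by compactness and continuity of $J_h$, prove $\overline\nu_h\to\overline\nu$ strongly, and finally recover local optimality for \cref{P discretized reduced} by a triangle-inequality argument.

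The only genuine difference is in the strong-convergence step. You take a detour through weak compactness of $\mathcal V_{ad}$ and weak lower semicontinuity of $J$: first establish $\limsup_h J(\overline\nu_h)\le J(\overline\nu)$, then identify every weak cluster point with $\overline\nu$ via \cref{growth3}, upgrade to $J(\overline\nu_h)\to J(\overline\nu)$, and only then apply \cref{growth3} a second time to get strong convergence. The paper bypasses all of this by applying \cref{growth3} to $\overline\nu_h$ \emph{immediately} (legitimate since $\overline\nu_h$ lies in the $\sigma$-ball by construction) and bounding the resulting right-hand side through the single telescoping identity
\[
\delta\|\overline\nu_h-\overline\nu\|_{L^2(\Omega)}^2 \le J(\overline\nu_h)-J(\overline\nu)
= \bigl[J(\overline\nu_h)-J_h(\overline\nu_h)\bigr] + \bigl[J_h(\overline\nu_h)-J_h(\tilde\nu_h)\bigr] + \bigl[J_h(\tilde\nu_h)-J(\overline\nu)\bigr],
\]
where the first and third brackets vanish by the two claims of \cref{lemma convergence objective} and the middle bracket is nonpositive by minimality. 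This yields strong convergence in one line, without invoking weak-$*$ subsequences or the weak-strong continuity of $S_p$. Your argument is sound and self-contained, but the paper's decomposition is shorter and uses strictly less machinery; it is worth noting that the very inequality you reach at the end of your middle step could have served as its starting point. A minor stylistic point: the paper uses a fixed local-optimality radius $\sigma/2$ (valid once $\|\overline\nu_h-\overline\nu\|\le\sigma/2$), whereas your $h$-dependent radius $\rho_h$ also works but is slightly less clean.
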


\begin{remark}
	Note that the quadratic growth condition \cref{growth3} is reasonable since it can be obtained by assuming suitable regularity and compatibility conditions and a sufficient second-order optimality condition (see \cite[Theorem 4.6]{ammann23}). 
\end{remark}

\begin{proof}
	Inspired by the idea by Casas and Tr\"oltzsch \cite{casastroeltzsch02}, for $\mathcal V_{ad}^{h,\sigma}\coloneqq\{\nu_h\in\mathcal V_{ad}^h\,|\,\|\nu_h - \overline\nu\|_{L^2(\Omega)}\leq\sigma\}$, we consider the minimization problem 
	\begin{equation}\label{P discretized reduced sigma}\tag{$\P_h^\sigma$}
		\min_{\nu_h\in\mathcal V_{ad}^{h,\sigma}}J_h(\nu_h),
	\end{equation}
	where the discrete reduced cost functional $J_h$ is given as in \cref{P discretized reduced}. With $Q_h\colon L^2(\Omega)\to\DG_0^h$ we denote the standard $L^2(\Omega)$-orthogonal projection operator onto $\DG_0^h$ that is given by 
	\begin{equation}
		(Q_h v)(x)=\sum_{T\in\mathcal T_h}\chi_T(x)\frac{1}{|T|}\int_T v(y)\d y\quad\forall v\in L^2(\Omega) \quad\forall x\in \overline\Omega,
	\end{equation}
	where $\chi_T$ denotes the characteristic function of $T$. Then, there exists a constant $c>0$, independent of $h$, such that 
	\begin{equation*}
		\|v - Q_h v\|_{L^2(\Omega)}\leq c h\|v\|_{H^1(\Omega)}\quad\forall v\in H^1(\Omega)
	\end{equation*}
	(see \cite[Prop. 1.135]{ern04}). Now, let $v\in L^2(\Omega)$ and $\epsilon>0$ be arbitrarily given. By the density $H^1(\Omega)\subset L^2(\Omega)$, there exists $v_\epsilon\in H^1(\Omega)$ with $\|v_\epsilon -v\|_{L^2(\Omega)}\leq\frac{\epsilon}{3}$. Then, it holds that 
	\begin{align*}
		\|v - Q_hv\|_{L^2(\Omega)}
		&\leq \|v - v_\epsilon\|_{L^2(\Omega)} + \|v_\epsilon - Q_hv_\epsilon\|_{L^2(\Omega)} + \|Q_hv_\epsilon - Q_hv\|_{L^2(\Omega)}
		\\
		&\leq 2\|v - v_\epsilon\|_{L^2(\Omega)} + ch\|v_\epsilon\|_{H^1(\Omega)}\leq \epsilon\quad\forall h\in \left(0,\frac{\epsilon}{3c\|v_\epsilon\|_{H^1(\Omega)}}\right),
	\end{align*}
	which implies that $Q_hv\to v$ in $L^2(\Omega)$ as $h\to 0$ for every $v\in L^2(\Omega)$. Furthermore, $Q_h$ maps $\mathcal V_{ad}$ into $\mathcal V_{ad}^h$ since for every $\nu\in\mathcal V_{ad}$ and every $x\in\overline\Omega$ it holds that
	\begin{equation*}
		\nu_- = \sum_{T\in\mathcal T_h}\chi_T(x)\frac{1}{|T|}\int_T v_-\d y \leq (Q_hv)(x) \leq \sum_{T\in\mathcal T_h}\chi_T(x)\frac{1}{|T|}\int_T v_+\d y = \nu_+.
	\end{equation*}
	Therefore, the sequence $\{\tilde\nu_h\}_{h>0}\coloneqq \{Q_h\overline\nu\}_{h>0}$ satisfies 
	\begin{equation}\label{nuh to nu}
		\tilde\nu_h\in\mathcal V_{ad}^h\quad\forall h>0\quad\text{and}\quad\tilde\nu_h\to \overline\nu\quad\text{in }L^2(\Omega)\quad\text{as }h\to 0. 
	\end{equation} 
	Consequently, there exists $\overline h>0$ such that $\tilde\nu_h\in\mathcal V_{ad}^{h,\sigma}$ for all $h\in(0,\overline h]$. In particular, $\mathcal V_{ad}^{h,\sigma}$ is non-empty for all $h\in(0,\overline h]$. Since $\mathcal V_{ad}^{h,\sigma}\subset\DG_0^h$ is compact and $J_h\colon L^2(\Omega)\supset\mathcal V_{ad}^{h,\sigma}\to\mathbb R$ is continuous (see the proof of \cref{theorem existence}), for every $h\in(0,\overline h]$, the auxiliary problem \cref{P discretized reduced sigma} admits at least one minimizer $\overline\nu_h\in\mathcal V_{ad}^{h,\sigma}$ due to the Weierstrass theorem. Then, by \cref{growth3}, we obtain for all $h\in(0,\overline h]$ that 
	\begin{equation}\label{ieq strong convergence}
		\delta\|\overline\nu_h-\overline\nu\|_{L^2(\Omega)}^2\leq J(\overline\nu_h) - J(\overline\nu) = [J(\overline\nu_h) - J_h(\overline\nu_h)] + [J_h(\overline\nu_h) - J_h(\tilde\nu_h)]+ [J_h(\tilde\nu_h)- J(\overline\nu)].
	\end{equation}
	The first and the third summand at the right-hand side of \cref{ieq strong convergence} converge to zero due to \cref{lemma convergence objective} as $h\to 0$. The second summand in \cref{ieq strong convergence} is non-positive since $\overline\nu_h\in\mathcal V_{ad}^{h,\sigma}$ is a minimizer of \cref{P discretized reduced sigma} and $\tilde\nu_h\in\mathcal V_{ad}^{h,\sigma}$ for all $h\in(0,\overline h]$. Therefore, \cref{ieq strong convergence} implies that 
	\begin{equation}\label{strong convergence}
		\overline\nu_h\to\overline\nu\quad\text{strongly in }L^2(\Omega)\text{ as }h\to 0.
	\end{equation}
	It remains to show that $\overline\nu_h\in\mathcal V_{ad}^h$ is a local minimizer to \cref{P discretized reduced}. For this purpose, let $\nu_h\in\mathcal V_{ad}^h$ with $\|\nu_h - \overline\nu_h\|_{L^2(\Omega)}\leq\frac{\sigma}{2}$ be arbitrarily given. According to \cref{strong convergence}, there exists $\widehat h>0$ such that $\|\overline\nu_h - \overline\nu\|_{L^2(\Omega)}\leq\frac{\sigma}{2}$ for all $h\in(0,\min\{\overline h,\widehat h\}]$. Therefore, it follows that
	\begin{equation*}
		\|\nu_h - \overline\nu\|_{L^2(\Omega)}\leq \|\nu_h - \overline\nu_h\|_{L^2(\Omega)} + \|\overline\nu_h - \overline\nu\|_{L^2(\Omega)}\leq\frac{\sigma}{2} + \frac{\sigma}{2} = \sigma\quad\forall h\in (0,\min\{\overline h,\widehat h\}].
	\end{equation*}
	In other words, $\nu_h\in\mathcal V_{ad}^{h,\sigma}$ for all $h\in (0,\min\{\overline h,\widehat h\}]$. Since $\overline\nu_h$ is a minimizer to \cref{P discretized reduced sigma}, it follows that $J_h(\overline\nu_h)\leq J_h(\nu_h)$ for all $\nu_h\in\mathcal V_{ad}^h$ with $\|\nu_h - \overline\nu_h\|_{L^2(\Omega)}\leq\frac{\sigma}{2}$. In other words, $\overline\nu_h$ is a local minimizer to \cref{P discretized reduced} for all $h\in (0,\min\{\overline h,\widehat h\}]$. 
\end{proof}

\section{Numerical test}
	
  In this section, we present a short numerical test to illustrate the performance of the proposed finite element discretization. We set $T= 2$ and $\Omega := (0,2)\times(0,1)$ with the Neumann boundary part $\Gamma_N=[0,2]\times\{1\}$ and the Dirichlet part $\Gamma_D= \partial\Omega\setminus\Gamma_N$. The damping term $\eta$, the source signal $f$, and the receivers $a_i$ for $i=1,\dots,30$ are exaclty specified as in our previous work \cite[section 5]{ammann23}. We consider a desired paramter $\nu_d\colon\overline\Omega\to\mathbb R$ given by (see \cref{fig2:a})
  \begin{equation*}
	\nu_d(x)\coloneqq \left\{\begin{aligned}
		&1.2
		&&\text{if } x\in[58/64,70/64]\times[39/64,51/64]
		\\
		&1.4
		&&\text{if } x\in[26/64,38/64]\times[26/64,38/64]
		\\
		&1.6
		&&\text{if } x\in[90/64,102/64]\times[20/64,32/64]
		\\
		&1
		&&\text{else.}
	\end{aligned}\right.
  \end{equation*}
  The observation wave information is specified by the solution to the forward problem \cref{system state} for $\nu = \nu_d$ under the deterministic noise model
  \begin{equation*}
	p^{ob}\coloneqq S_p(\nu_d) + \mu
  \end{equation*}
  with the noise level is $l=2\%$ (cf. \cite[section 5]{ammann23}). Furthermore, we choose the regularization paramter $\lambda\coloneqq 0.001$ and the lower and upper bounds $\nu_{\min}=1$ and $\nu_{\max} =1.6$. For our computational implementation, we utilize the programming language \texttt{Python} (version 3.6.9) and the package \texttt{DOLFIN} from the open-source computing platform \texttt{FEniCS} (cf. \cite{logg10,logg12}). We set the discretization paremeter $h = 1/64$. Since our numerical tests show the necessity to satisfy the CFL-condition \cref{cfl} with $c_{cfl}=1/6$, we set $N= 384$. To solve the discrete optimal control problem, we choose an SQP method (see \cite{ammann24}) with the initial value $\nu_0\equiv 1$. Using the proposed finite element discretization, the SQP method reasonably reconstructs the parameter $\nu_d$ after 16 iterations (see \cref{fig2:b}).

  \begin{figure}[htbp]
	\centering
	\captionsetup{justification=centering}
	\subfloat[Reference $\nu_d$.]{
		\label{fig2:a}\includegraphics[trim = 12.9cm 11.8cm 12.9cm 11.8cm, clip,width=0.45\textwidth]{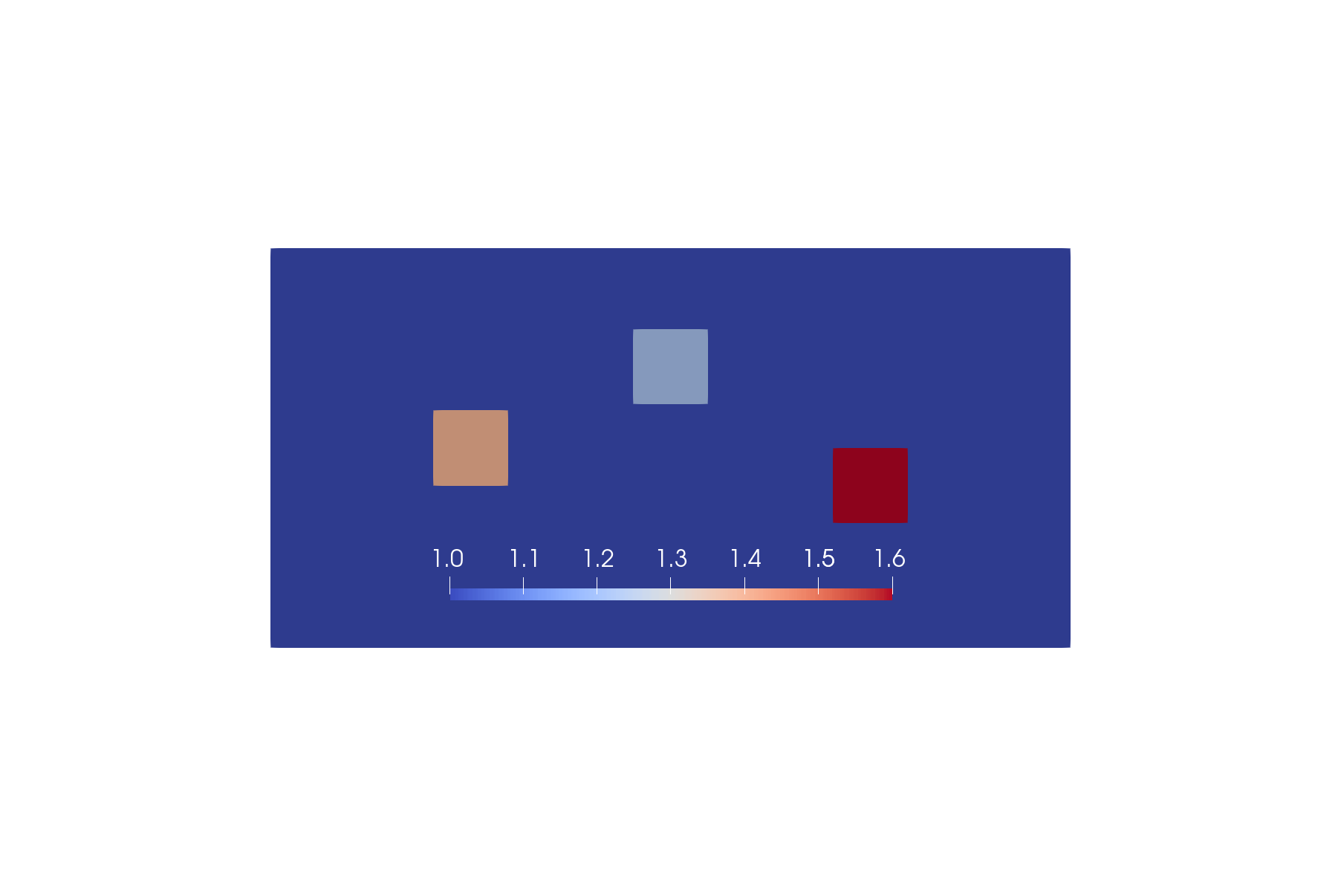}
	}
	\subfloat[Computed solution.]{
		\label{fig2:b}\includegraphics[trim = 12.9cm 11.8cm 12.9cm 11.8cm, clip,width=0.45\textwidth]{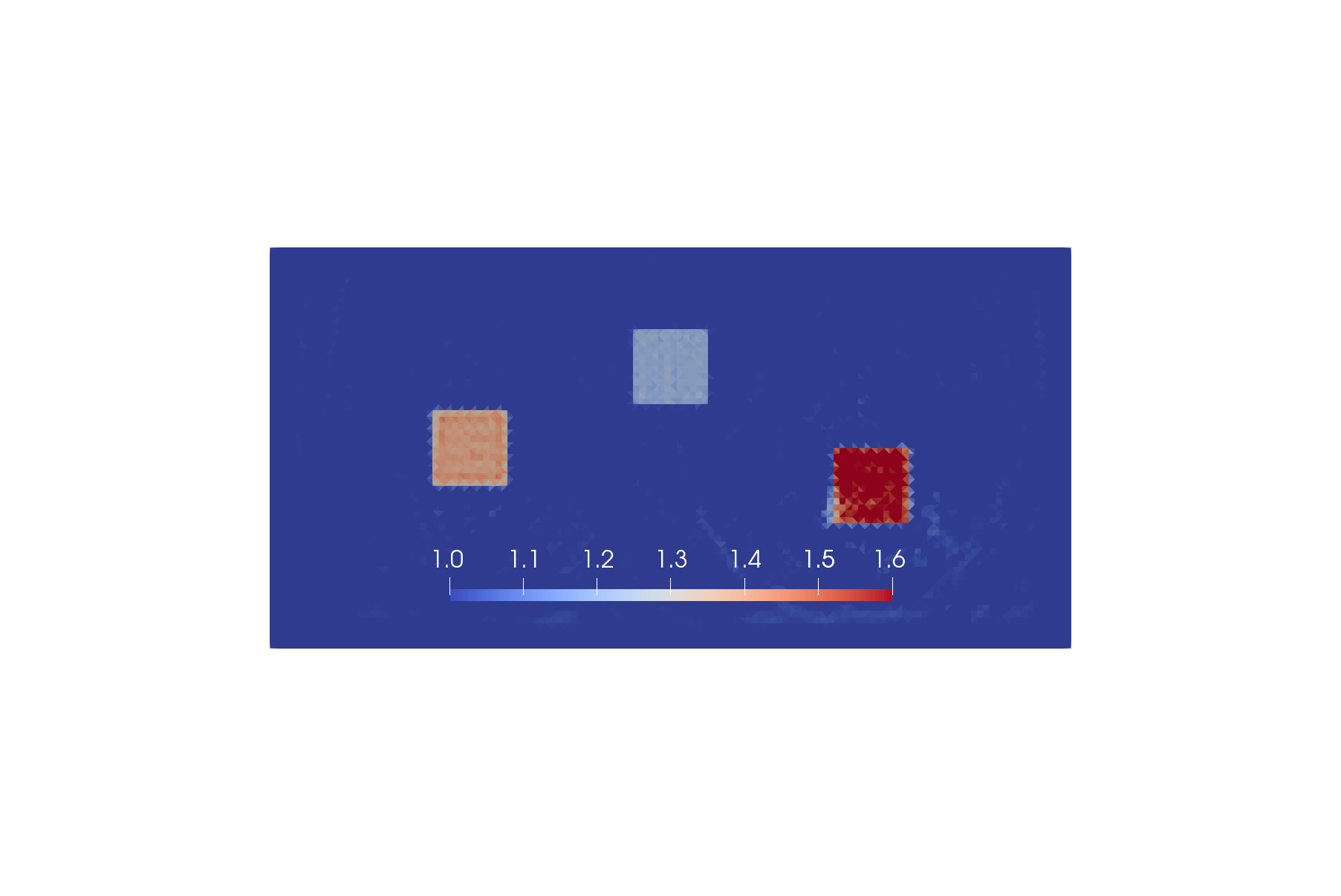}
	}
	\caption{Reconstruction in the numerical experiment.} 
	\label{fig:experiment}
\end{figure}

\subsection*{Acknowledgments} The authors are very thankful to Joachim Rehberg (WIAS Berlin) for the insightful discussion and his valuable hints concerning Sobolev spaces with partially vanishing traces.

\bibliographystyle{plainurl}
\bibliography{references}

\begin{thebibliography}{10}

\bibitem{ammann23}
Luis Ammann and Irwin Yousept.
\newblock Acoustic {F}ull {W}aveform {I}nversion via {O}ptimal {C}ontrol:
  {F}irst- and {S}econd-{O}rder {A}nalysis.
\newblock {\em SIAM J. Control Optim.}, 61(4):2468--2496, 2023.
\newblock \href {https://doi.org/10.1137/22M1480045}
  {\path{doi:10.1137/22M1480045}}.

\bibitem{ammann24}
Luis Ammann and Irwin Yousept.
\newblock Analysis of the {SQP} method for hyperbolic {PDE}-constrained
  optimization in acoustic full waveform inversion, 2024.
\newblock Preprint.
\newblock \href {https://arxiv.org/abs/2405.05158} {\path{arXiv:2405.05158}}.

\bibitem{brenner08}
Susanne~C. Brenner and L.~Ridgway Scott.
\newblock {\em The mathematical theory of finite element methods}, volume~15 of
  {\em Texts in Applied Mathematics}.
\newblock Springer, New York, third edition, 2008.
\newblock \href {https://doi.org/10.1007/978-0-387-75934-0}
  {\path{doi:10.1007/978-0-387-75934-0}}.

\bibitem{casastroeltzsch02}
Eduardo Casas and Fredi Tr\"oltzsch.
\newblock Error estimates for the finite-element approximation of a semilinear
  elliptic control problem.
\newblock {\em Control Cybernet.}, 31(3):695--712, 2002.
\newblock Well-posedness in optimization and related topics (Warsaw, 2001).

\bibitem{ciarlet02}
Philippe~G. Ciarlet.
\newblock {\em The Finite Element Method for Elliptic Problems}.
\newblock Society for Industrial and Applied Mathematics, 2002.
\newblock \href {https://doi.org/10.1137/1.9780898719208}
  {\path{doi:10.1137/1.9780898719208}}.

\bibitem{clark87}
Dean~S. Clark.
\newblock Short proof of a discrete {G}ronwall inequality.
\newblock {\em Discrete Appl. Math.}, 16(3):279--281, 1987.
\newblock \href {https://doi.org/10.1016/0166-218X(87)90064-3}
  {\path{doi:10.1016/0166-218X(87)90064-3}}.

\bibitem{egert17}
Moritz Egert and Patrick Tolksdorf.
\newblock Characterizations of {S}obolev functions that vanish on a part of the
  boundary.
\newblock {\em Discrete Contin. Dyn. Syst. Ser. S}, 10(4):729--743, 2017.
\newblock \href {https://doi.org/10.3934/dcdss.2017037}
  {\path{doi:10.3934/dcdss.2017037}}.

\bibitem{ern04}
Alexandre Ern and Jean-Luc Guermond.
\newblock {\em Theory and practice of finite elements}, volume 159 of {\em
  Applied Mathematical Sciences}.
\newblock Springer-Verlag, New York, 2004.
\newblock \href {https://doi.org/10.1007/978-1-4757-4355-5}
  {\path{doi:10.1007/978-1-4757-4355-5}}.

\bibitem{hou13}
Tianliang Hou.
\newblock Error estimates and superconvergence of semidiscrete mixed methods
  for optimal control problems governed by hyperbolic equations.
\newblock {\em Numerical Analysis and Applications}, 5, 10 2012.
\newblock \href {https://doi.org/10.1134/S1995423912040076}
  {\path{doi:10.1134/S1995423912040076}}.

\bibitem{logg12}
Anders Logg, Kent-Andre Mardal, and Garth~N. Wells, editors.
\newblock {\em Automated solution of differential equations by the finite
  element method}, volume~84 of {\em Lecture Notes in Computational Science and
  Engineering}.
\newblock Springer, Heidelberg, 2012.
\newblock The FEniCS book.
\newblock \href {https://doi.org/10.1007/978-3-642-23099-8}
  {\path{doi:10.1007/978-3-642-23099-8}}.

\bibitem{logg10}
Anders Logg and Garth~N. Wells.
\newblock D{OLFIN}: automated finite element computing.
\newblock {\em ACM Trans. Math. Software}, 37(2):Art. 20, 28, 2010.
\newblock \href {https://doi.org/10.1145/1731022.1731030}
  {\path{doi:10.1145/1731022.1731030}}.

\bibitem{peralta22}
Gilbert Peralta and Karl Kunisch.
\newblock Mixed and hybrid {P}etrov-{G}alerkin finite element discretization
  for optimal control of the wave equation.
\newblock {\em Numer. Math.}, 150(2):591--627, 2022.
\newblock \href {https://doi.org/10.1007/s00211-021-01258-9}
  {\path{doi:10.1007/s00211-021-01258-9}}.

\bibitem{yee66}
Kane Yee.
\newblock Numerical solution of initial boundary value problems involving
  maxwell's equations in isotropic media.
\newblock {\em IEEE Transactions on Antennas and Propagation}, 14(3):302--307,
  1966.
\newblock \href {https://doi.org/10.1109/TAP.1966.1138693}
  {\path{doi:10.1109/TAP.1966.1138693}}.

\end{thebibliography}

\end{document}